\newtheorem{thm}{Theorem}[section]
\newtheorem{cor}[thm]{Corollary}
\newtheorem{lem}[thm]{Lemma}
\newtheorem{pro}[thm]{Proposition}
\theoremstyle{definition}
\theoremstyle{question}
\newtheorem{que}[thm]{Question}
\theoremstyle{remark}
\theoremstyle{example}
\newtheorem{example}[thm]{Example}
\theoremstyle{note}
\newcommand{\FF}{\mathbb{F}}
\begin{document}

\title{A characterization of  abelian group codes in terms of their parameters }

\author{Fatma Altunbulak  Aksu and \.Ipek Tuvay}

\address{Department of Mathematics, Mimar Sinan Fine Arts University, \c{S}i\c{s}li, Istanbul, Turkey}

\email{fatma.altunbulak@msgsu.edu.tr, ipek.tuvay@msgsu.edu.tr}

\subjclass[2010]{Primary: 20K01,94B05; Secondary: 16S34}

\thanks{}

\keywords{}

\date{\today}

\dedicatory{}

\commby{}

\maketitle
\begin{abstract}In 1979, Miller proved that for a group $G$ of odd order, two minimal group codes in $\mathbb{F}_2G$ are $G$-equivalent if and only they have identical weight distribution. In 2014, Ferraz-Guerreiro-Polcino Milies disprove Miller's result by giving an example of two non-$G$-equivalent minimal codes with identical weight distribution. In this paper, we give a characterization of finite abelian groups so that over a specific set of group codes, equality of important parameters of two codes implies the $G$-equivalence of these two codes. As a corollary, we prove that  two minimal codes with the same weight distribution 
are $G$-equivalent if and only if  for each prime divisor $p$ of $|G|$, the Sylow $p$-subgroup of 
$G$ is homocyclic.

\end{abstract}

\section{Introduction}
\label{Sec:1}

An abelian code over a field is an ideal in a finite group algebra of an abelian group. This definition is given by  Berman \cite{Berman} and MacWilliams \cite{MacWilliams1}.
More generally,   {\bf{a group code}} is  an ideal in a finite group algebra. One can easily show that cyclic codes are ideals in finite group algebras of cyclic groups. Reed-Muller codes are group codes for  elementary abelian $p$-groups
(see \cite{Berman2}). There are many  important linear codes which can be  viewed as  group codes \cite{KS}. It is proved in \cite{BW} that  group codes are   asymptotically good over any field. Besides this, group codes have more algebraic structures than linear codes. Because of all these, they are of interests for many researchers. 

There are parameters which measure the quality of codes: length, dimension and weight of codes. Let $G$ be an abelian group and $\FF$  a finite field of characteristic coprime to the order of $G$. For an element $\alpha \in \FF G$, the {\bf{support of $\alpha$}},
is the set $\mathrm{supp}(\alpha)=\{g \in G |~ \alpha_g\neq 0\}$ and the {{\bf{weight of $\alpha$}}} is $w(\alpha)=|~\mathrm{supp}(\alpha)~|$. If $I$ is an ideal in $\FF G$, then the {\bf{weight}} of $I$ is $w(I)= {\rm{min}} ~{\{w(\alpha)|~ \alpha \in I, \alpha\neq 0\}}$. Under some equivalence relations, these three parameters are preserved. One of these equivalences is $G$-equivalence.
 
 As defined in \cite{Miller},  two abelian codes 
$I$ and $J$ in $\mathbb{F}G$ are  {{\bf $G$-equivalent}} if there is a group automorphism $\varphi:G\rightarrow G$ whose linear extension to the group algebra maps $I$ onto $J$. If two codes are $G$-equivalent, then they have the same weight, the identical weight distribution and the same dimension. 
  
In \cite{Miller} (see Theorem 3.9), it is proved that for an abelian group of odd order, two minimal abelian codes in $\mathbb{F}_2G$ are $G$-equivalent if and only if they have the identical weight distribution. 
In \cite{FGM} (see Proposition IV.2), the authors  show that for $\mathbb{F}_2(C_9\times C_3)$, there are non-$G$-equivalent minimal codes having the identical weight distribution.

From Table IV in \cite{FGM}, we can conclude that, for cyclic groups,  equality of weight distrubition of codes gives the $G$-equivalence of codes. Now it is natural to ask for which abelian groups and fields, equality of weight distribution implies the $G$-equivalence of codes.

In this paper, by this motivation, for a semisimple finite abelian group algebra $\FF_q G$, we are concerned with the following conditions and characterize finite abelian groups satisfying these conditions.

Let $\mathcal{I}$ be an arbitrary set of codes in $\FF_q G$.

{\bf Condition A:}  Let   $I_1$, $I_2$ be in $\mathcal{I}$. $I_1$ is $G$-equivalent to $I_2$ if and only if $I_1$ and
$I_2$ have the same weight.

{\bf Condition B:}  Let $I_1$ and $I_2$ be in $\mathcal{I}$. $I_1$ is $G$-equivalent to $I_2$ if and only if $I_1$ and
$I_2$ have the identical weight distribution. 

Besides the  weight and the weight distribution of codes, we also consider dimension of codes.

{\bf Condition C:}  Let $I_1$ and $I_2$ be in $\mathcal{I}$. $I_1$ is $G$-equivalent to $I_2$ if and only if $I_1$ and
$I_2$ have the same dimension. 

Note that the forward direction of the conditions above always holds, because $G$-equivalence preserves the important
parameters such as dimension, weight and weight distrubition of the codes.

In this paper, for a specific family of ideals in $\FF_q G$, we characterize finite abelian groups for which Conditions A, B, C holds.
The structure of the paper is as follows. In section two, we give some needed material for the subject. In section three, we concentrate on the problem for $p$-groups. In the last part, we consider the problem for the composite groups.

\section{Preliminaries}
\label{Sec:2}

Let $G$ be a finite abelian group and $\FF_q$ a field such that $(q, |G|)=1$.
For an arbitrary subgroup $H\leq G$, the element 
 $$\widehat{H}= \frac{1}{|H|} \sum_{h\in H}h \in \FF_q G$$ is an idempotent element.

We say that $H\leq G$ is
{{\bf cocyclic}} if $H\neq G$ and $G/H$ is a cyclic group. When $G$ is an abelian $p$-group and $H$
is a cocyclic subgroup of $G$, there is a unique subgroup $H^*\leq G$ containing $H$ with $H^*/H\cong C_p$.

Suppose that $G$ is an abelian $p$-group. For a cocyclic subgroup $H<G$, there is a corresponding
idempotent $e_H=\widehat{H}-\widehat{H^*}$. When $H=G$, set $e_G=\widehat{G}$. 
When $G$ is an abelian composite  group, set $G=G_{p_1}\times \ldots \times G_{p_k}$
where $G_{p_i}$ is a Sylow $p_i$-subgroup of $G$ for any $i$. Any cocyclic $H$ of $G$ can be
written as
$$H=H_{p_1}\times \ldots \times H_{p_k}$$
where for any $i$, either $H_{p_i}$ is a cocyclic subgroup of $G_{p_i}$ or $H_{p_i}=G_{p_i}$. Then for such $H$, $$e_H=e_{H_{p_1}}\ldots e_{H_{p_k}}$$
is the corresponding idempotent. For $H=G$, set $e_G=\widehat{G}$. These idempotents are defined in \cite{FGM}. Let $\mathcal{S}_{cc}(G)$ denote the set of all cocyclic subgroups of $G$. By Proposition II.6 and Lemma II.7 in \cite{FGM}, the set 
$\{e_H| \ H\in \mathcal{S}_{cc}(G)\}\cup \{\widehat{G}\} $ is a set of orthogonal idempotents in $\FF_qG$ and we have 
$\sum\limits_{H \in \mathcal{S}_{cc}(G)  }e_H + \widehat{G}=1$.

For any finite abelian group $G$ and for a cocyclic subgroup $H< G$, set $I_H=(\FF_q G) e_H$,  $I_G=(\FF_qG)\widehat{G}$ and
$$\mathcal{I}_{\FF_qG}=\{ I_H \ | \ H < G \text{ is cocyclic or } H=G\}.$$

Let $p$ be a prime integer, $G$ a finite abelian group of exponent $p^n$ and $\FF_q$ a finite field so that $\FF_q G$ is semisimple. If $q$ is a generator of the unit group $U(\mathbb{Z}_{p^n})$, then $\mathcal{I}_{\FF_qG}$ is the set of minimal ideals in $\FF_q G$ (see Theorem 4.1 in \cite{FM}).

For any finite abelian group $G$, it is easy to see that $\{g\widehat{H}\ | \ gH\in G/H\}$ is a basis for $ (\FF_q G) \widehat{H}$
over $\FF_q$.
Recall that from Proposition 3.6.7 of \cite{MS} we have a ring isomorphism
$$\varphi_H: (\FF_q G) \widehat{H} \to \FF_q(G/H)$$
where $\varphi_H$ is defined to be the linear extension of the map $g \widehat{H} \mapsto gH$.

We begin with some  simple observations.

\begin{lem}\label{reduction1}
Let $G$ be a finite group and let $T$ be a normal subgroup of $G$. If $\alpha \in (\FF_q G) \widehat{T}$
then $w(\alpha)=|T| \ w(\varphi_T(\alpha)) $. Moreover, if $I\subseteq (\FF_q G) \widehat{T}$ then $w(I)=|T| \ w(\varphi_T(I)) $.
\end{lem}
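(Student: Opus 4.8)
The statement is essentially a "multiplicity" observation: elements of $(\FF_q G)\widehat{T}$ are exactly the $\FF_q$-linear combinations of the coset sums $g\widehat{T}$, and multiplying by $\widehat{T}$ "smears" the support of a coefficient vector uniformly over a full coset. The plan is to write an arbitrary $\alpha \in (\FF_q G)\widehat{T}$ in terms of the basis $\{g\widehat{T} \mid gT \in G/T\}$ recalled just before the lemma, compute its support explicitly, and compare with the support of $\varphi_T(\alpha)$.

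First I would fix a transversal: choose coset representatives $g_1,\dots,g_m$ for $G/T$ (where $m = [G:T]$). Since $\alpha \in (\FF_q G)\widehat{T}$, write $\alpha = \sum_{i=1}^m c_i\, g_i\widehat{T}$ with $c_i \in \FF_q$. Expanding $\widehat{T} = \frac{1}{|T|}\sum_{t\in T} t$, the coefficient of a group element $x \in G$ in $\alpha$ is $\frac{c_i}{|T|}$ where $g_iT$ is the unique coset containing $x$. Hence $\alpha_x \ne 0$ exactly when $c_i \ne 0$ for the index $i$ with $x \in g_iT$. This shows $\mathrm{supp}(\alpha)$ is a union of the cosets $g_iT$ over those $i$ with $c_i \ne 0$, and since these cosets are disjoint and each has size $|T|$, we get $w(\alpha) = |T| \cdot \#\{i : c_i \ne 0\}$. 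On the other hand $\varphi_T(\alpha) = \sum_i c_i (g_iT) \in \FF_q(G/T)$ by definition of $\varphi_T$ as the linear extension of $g\widehat{T}\mapsto gT$, so $w(\varphi_T(\alpha)) = \#\{i : c_i \ne 0\}$. Combining these two counts gives $w(\alpha) = |T|\, w(\varphi_T(\alpha))$.

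For the "moreover" part, note that if $I \subseteq (\FF_q G)\widehat{T}$ then $\varphi_T(I)$ is the image of $I$ under the ring isomorphism $\varphi_T$ from Proposition 3.6.7 of \cite{MS}; in particular $\varphi_T$ restricts to a bijection between the nonzero elements of $I$ and the nonzero elements of $\varphi_T(I)$. Since the first part gives $w(\alpha) = |T|\,w(\varphi_T(\alpha))$ for every $\alpha \in I$, taking the minimum over nonzero $\alpha \in I$ on the left corresponds to taking the minimum over nonzero elements of $\varphi_T(I)$ on the right, yielding $w(I) = |T|\, w(\varphi_T(I))$.

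There is essentially no hard step here; the only mild care needed is in verifying that the coset decomposition of $\mathrm{supp}(\alpha)$ is genuinely a disjoint union (so that weights add rather than just bound each other), which is immediate since distinct cosets of $T$ are disjoint, and in noting that normality of $T$ is what makes $\widehat{T}$ central and $(\FF_q G)\widehat{T}$ a two-sided ideal so that $\varphi_T$ is a ring map — though for the weight computation itself only the $\FF_q$-module structure is used. I would keep the write-up to the explicit coefficient computation above.
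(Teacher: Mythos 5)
Your proposal is correct and follows essentially the same route as the paper: expand $\alpha$ in the basis $\{g\widehat{T} \mid gT \in G/T\}$, observe that the supports of distinct basis elements are disjoint unions of full cosets, and count nonzero coefficients on both sides. The only difference is that you spell out the coefficient computation and the passage from elementwise weights to minimum weights for the ideal, details the paper compresses into ``Hence, the claim follows.''
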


\begin{proof}
Since $\mathcal{B}=\{g \widehat{T}\ | \ gT \in G/T\}$ is a basis for $ (\FF_q G) \widehat{T}$, $\alpha$ can be
written as follows
$$\alpha=\sum_{g\widehat{T} \in \mathcal{B}}\alpha_{gT} ~g \widehat{T}.$$
Also,  different elements in $\mathcal{B}$ have disjoint supports, so we have that
$$w(\alpha)= k |T|$$
where $k$ is the number of non-zero $\alpha_{gT}$ in the above presentation of $\alpha$. On the other hand,
$$\varphi_T(\alpha)=\sum_{g\widehat{T} \in \mathcal{B}}\alpha_{gT}~gT\in \FF_q(G/T)$$
has weight equal to $k$. Hence, the claim follows.
\end{proof}
Recall that two subgroups $H$ and $K$ are called {\bf $G$-isomorphic} if there is an automorphism $\theta$ of $G$ 
such that $\theta(H)=K$. 

\begin{lem}\label{equivGiso}
Let $G$ be a finite abelian group. Then two codes $I_{H}$ and $I_{K}$ in ${\mathcal{I}}_{{\FF}_q G}$ are $G$-equivalent 
if and only if $H$ and $K$ are $G$-isomorphic.
\end{lem}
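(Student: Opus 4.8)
The plan is to reduce everything to a clean description of the stabilizer of the generating idempotent $e_H$. I will use freely that $\FF_qG$ is commutative (so an ideal generated by an idempotent $e$ has $e$ as its unique identity element), and that the linear extension of a group automorphism is a ring automorphism of $\FF_qG$ agreeing with the automorphism on $G$.

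For the implication ``$H,K$ are $G$-isomorphic $\Rightarrow$ $I_H,I_K$ are $G$-equivalent'', I would start from $\theta\in\Aut(G)$ with $\theta(H)=K$. Since $G$ is abelian, $\theta$ preserves each Sylow subgroup $G_{p_i}$ and restricts to $\theta_i\in\Aut(G_{p_i})$; writing $H=\prod_iH_{p_i}$, $K=\prod_iK_{p_i}$ gives $\theta_i(H_{p_i})=K_{p_i}$, and (as $K_{p_i}$ is again cocyclic or all of $G_{p_i}$) the uniqueness of the index-$p_i$ overgroup $H_{p_i}^*$ forces $\theta_i(H_{p_i}^*)=K_{p_i}^*$. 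Hence $\theta_i(e_{H_{p_i}})=e_{K_{p_i}}$ for every $i$ (with $\theta_i(\widehat{G_{p_i}})=\widehat{G_{p_i}}$ when $H_{p_i}=G_{p_i}$), so the linear extension of $\theta$ sends $e_H=\prod_ie_{H_{p_i}}$ to $e_K$ and therefore $I_H$ onto $I_K$; the case $H=G$ is immediate from $\theta(\widehat G)=\widehat G$.

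For the converse, suppose the linear extension $\widehat\theta$ of some $\theta\in\Aut(G)$ maps $I_H$ onto $I_K$. Then $\widehat\theta(e_H)$ is an identity element of $\widehat\theta(I_H)=I_K$, so $\widehat\theta(e_H)=e_K$. Put $\mathrm{Stab}(e_H)=\{g\in G:\ ge_H=e_H\}$, a subgroup of $G$; since $\widehat\theta|_G=\theta$, we get $\theta(\mathrm{Stab}(e_H))=\mathrm{Stab}(e_K)$. Thus it suffices to prove the identity $\mathrm{Stab}(e_H)=H$ for every $H\in\mathcal{S}_{cc}(G)\cup\{G\}$, for then $\theta(H)=K$, i.e.\ $H$ and $K$ are $G$-isomorphic (this also forces $K=G$ exactly when $H=G$, since $\mathrm{Stab}(\widehat G)=G$ while $\mathrm{Stab}(e_L)=L\neq G$ for cocyclic $L$).

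It remains to establish $\mathrm{Stab}(e_H)=H$. The inclusion $H\subseteq\mathrm{Stab}(e_H)$ is clear: for $g=\prod_ig_i$ with $g_i\in H_{p_i}$, commutativity gives $ge_H=\prod_i(g_ie_{H_{p_i}})=\prod_ie_{H_{p_i}}=e_H$, since $g_i$ fixes both $\widehat{H_{p_i}}$ and $\widehat{H_{p_i}^*}$. For the reverse inclusion, write a subgroup $N\supseteq H$ as $N=\prod_i(N\cap G_{p_i})$; for $g\in N\cap G_{p_i}$, the isomorphism $\FF_qG\cong\bigotimes_i\FF_qG_{p_i}$ (under which $e_H=\bigotimes_ie_{H_{p_i}}$ and $g$ only alters the $i$-th slot) turns $ge_H=e_H$ into $ge_{H_{p_i}}=e_{H_{p_i}}$ in $\FF_qG_{p_i}$. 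This reduces the problem to the $p$-group case, $G$ a $p$-group and $H<G$ cocyclic: applying $\varphi_H:(\FF_qG)\widehat H\to\FF_q(G/H)$ to $g(\widehat H-\widehat{H^*})=\widehat H-\widehat{H^*}$ yields $(\bar g-1)(1-\widehat{H^*/H})=0$ in $\FF_q(G/H)$ with $\bar g=gH$, i.e.\ $\bar g-1$ is fixed by the order-$p$ subgroup $H^*/H$ acting on $\FF_q(G/H)$ by left translation. Hence $\mathrm{supp}(\bar g-1)$ is a union of $(H^*/H)$-orbits, each of size $p$, with constant coefficient on each orbit; as this support has at most two elements, and when $p=2$ the characteristic of $\FF_q$ is odd (so $1\neq-1$), this is impossible unless $\bar g=1$, i.e.\ $g\in H$. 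I expect this last computation of $\mathrm{Stab}(e_H)$ to be the main obstacle, as it is precisely where cocyclicity (making $G/H$ cyclic with a unique order-$p$ subgroup) and the coprimality $(q,|G|)=1$ are genuinely used; the remaining steps are bookkeeping with the idempotents.
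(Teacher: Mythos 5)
Your argument is correct, and it is more self-contained than the paper's, which handles both directions by citing Lemma III.1 of \cite{FGM} (namely $\theta(I_H)=I_{\theta(H)}$): the paper's converse goes $\theta(I_H)=(\FF_qG)e_{\theta(H)}=(\FF_qG)e_K$, extracts $e_{\theta(H)}=e_K$ by uniqueness of the identity element exactly as you do, and then concludes $\theta(H)=K$ from the injectivity of $H\mapsto e_H$ (which is implicit there, resting on the orthogonality of the idempotents $\{e_H\}$ from \cite{FGM}). You replace both citations by direct arguments: the forward direction by a Sylow-by-Sylow computation using that the $G_{p_i}$ are characteristic and that $H_{p_i}^*$ is the unique index-$p_i$ overgroup (essentially reproving the needed case of \cite{FGM}, Lemma III.1), and the converse by the identity $\mathrm{Stab}(e_H)=H$, proved via $\varphi_H$ and a support/coset-constancy argument in $\FF_q(G/H)$. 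What your route buys is an explicit recovery of $H$ from $e_H$ (hence a proof, not an appeal, that distinct members of $\mathcal{S}_{cc}(G)\cup\{G\}$ give distinct idempotents), and it makes visible exactly where cocyclicity and $(q,|G|)=1$ enter (the orbit-size versus support-size count, and $1\neq -1$ when $p=2$); the cost is length, plus two small points worth stating explicitly: in the tensor cancellation you should note that $\bigotimes_{j\neq i}e_{H_{p_j}}\neq 0$, and in the reduction to the $p$-group case the slots with $H_{p_i}=G_{p_i}$ are trivial since there $g\in G_{p_i}=H_{p_i}$ automatically.
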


\begin{proof} 
If $H$ and $K$ are $G$-isomorphic, then by definition there exists $\theta \in {\rm{Aut}}(G)$ such that $\theta(H)=K$. We will also use $\theta$ to denote its extension to $\FF_qG$. By Lemma III.1 in \cite{FGM}, we have that for the extension of $\theta$ on the group algebras, $$\theta(I_H)=I_{\theta(H)}=I_K$$ holds. That is $I_H$ is $G$-equivalent to $I_K$. 
 
Conversely, assume that $I_{H}=(\FF_qG)e_H$ and $I_{K}=(\FF_qG)e_K$ are $G$-equivalent.  This implies that there exists an automorphism $\theta$ of $G$
such that $\theta(I_{H})=I_{K}$. In this case, either $I_H=(\FF_qG)\widehat{G}=I_K$ or both $I_H$ and $I_K$ are different from $(\FF_qG)\widehat{G}$. By Lemma III.1 in \cite{FGM} we have that $$\theta(I_{H})= (\FF_qG) e_{\theta{(H)}}=I_K=(\FF_qG)e_K.$$ Then we have $e_K=e_Ke_{\theta{(H)}}=e_{\theta{(H)}}$ as $e_{\theta(H)}$ and $e_K$ are identity elements of  $(\FF_qG)e_K$. Since we have $e_K=e_{\theta{(H)}}$, it follows that $\theta(H)=K$. 
\end{proof}

The following result will be used frequently in the paper.

\begin{pro}[Proposition 1.1 of \cite{AT}]\label{prop11}
Let $G$ be a finite group and let $H$, $K$ be cocyclic subgroups of $G$. Then $H$ and $K$ are $G$-isomorphic if and 
only if they are isomorphic.
\end{pro}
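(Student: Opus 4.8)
The forward implication is immediate: if $\theta\in\Aut(G)$ satisfies $\theta(H)=K$, then $\theta|_H$ is an isomorphism $H\to K$. So the content is the converse, which I would prove as follows. First, reduce to $p$-groups: writing $G=\prod_p G_p$ for the primary decomposition, each $G_p$ is characteristic so $\Aut(G)=\prod_p\Aut(G_p)$, and as recalled in Section~2 a cocyclic $H$ decomposes as $H=\prod_p H_p$ with $H_p$ cocyclic in $G_p$ or equal to $G_p$; likewise $K=\prod_p K_p$. Since $H_p$ is the (characteristic) Sylow $p$-subgroup of $H$, the hypothesis $H\cong K$ forces $H_p\cong K_p$ for every $p$ (and $H_p=G_p$ iff $K_p=G_p$, comparing orders). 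Thus it suffices to find, for each $p$, an automorphism of $G_p$ carrying $H_p$ to $K_p$, and we may assume $G$ is a finite abelian $p$-group with $G/H\cong C_{p^a}$, $a\geq 1$.

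The key geometric step is to choose a good decomposition adapted to $H$. Note $p^{a-1}G\not\subseteq H$, since otherwise $G/H$ would be a quotient of $G/p^{a-1}G$, of exponent $\leq p^{a-1}$, contradicting $G/H\cong C_{p^a}$. I claim one can choose a lift $g\in G$ of a generator of $G/H$ with $|g|=\exp(G)$: if not, every element $g$ of maximal order would have $p^{a-1}g\in H$ (its image in $G/H$ having order $<p^a$); fixing a basis element $x_1$ of maximal order, every $x\in G$ is then handled by observing that $x$ (if $x$ has maximal order) or $x_1+x$ (otherwise, whose $x_1$-coordinate is then a unit) has maximal order, giving $p^{a-1}x\in H$ either way and hence $p^{a-1}G\subseteq H$, a contradiction. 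For such a $g$ the cyclic subgroup $\langle g\rangle$ is a direct summand, $G=\langle g\rangle\oplus G'$; moreover $H+\langle g\rangle=G$ and $H\cap\langle g\rangle=\langle p^a g\rangle$ (the unique subgroup of index $[G:H]=p^a$ in the cyclic group $\langle g\rangle$).

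Now let $q\colon\langle g\rangle\to Q:=\langle g\rangle/\langle p^a g\rangle\cong C_{p^a}$ be the quotient map. The image of $H$ in $G/\langle p^a g\rangle=Q\oplus G'$ is a complement to $Q$, hence the graph of a unique $\phi\in\Hom(G',Q)$; explicitly $H=H_\phi:=\{x+w : x\in\langle g\rangle,\ w\in G',\ q(x)=\phi(w)\}$. Automorphisms of $G$ of the form $x+w\mapsto x+\gamma(w)+w$ with $\gamma\in\Hom(G',\langle g\rangle)$ send $H_\phi$ to $H_{\phi+q\circ\gamma}$; those of the form $\mathrm{id}\oplus\beta$ with $\beta\in\Aut(G')$ send it to $H_{\phi\circ\beta^{-1}}$; those of the form $\alpha\oplus\mathrm{id}$ with $\alpha\in\Aut(\langle g\rangle)$ send it to $H_{\overline\alpha\circ\phi}$, where $\overline\alpha$ runs over all of $\Aut(Q)$. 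Feeding $\phi$ into the exact sequence $0\to\Hom(G',\langle p^a g\rangle)\to\Hom(G',\langle g\rangle)\xrightarrow{q_*}\Hom(G',Q)\xrightarrow{\partial}\Ext^1(G',\langle p^a g\rangle)$, the $\gamma$-translations move $\phi$ through the whole coset $\phi+\ker\partial$, while $\partial\phi$ is precisely the class of the extension $0\to\langle p^a g\rangle\to H\to G'\to 0$. Hence the $\Aut(G)$-orbit of $H$ is determined by this extension class up to the residual action of $\Aut(\langle p^a g\rangle)\times\Aut(G')$.

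It remains to see that this invariant is completely determined by the abstract isomorphism type of $H$, and this is where cocyclicity is essential. Since $g$ has maximal order, Krull--Schmidt shows the complement $G'$ — $G$ with one copy of its largest cyclic factor removed — is determined up to isomorphism independently of $g$, and any two maximal-order cyclic summands of $G$ are $\Aut(G)$-conjugate (combine isomorphisms $\langle g\rangle\to\langle g'\rangle$ and $G'\to G''$ into an automorphism of $G$). So after one initial automorphism, $H$ and $K$ correspond to $\phi,\psi\in\Hom(G',Q)$ for the \emph{same} decomposition $G=\langle g\rangle\oplus G'$. One then brings $\phi$ to normal form: using $\Aut(G')$ arrange $\phi$ to be supported on a single cyclic summand of $G'$, and using $\Aut(Q)$ together with the $q_*$-translations normalize the surviving value to a power of $p$; computing the partition of $H_\phi$ in terms of this data exhibits the normal form as an invariant of the isomorphism type of $H_\phi$ alone, so $H\cong K$ forces $\phi$ and $\psi$ into the same orbit and the required automorphism exists. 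I expect this last normal-form computation — bookkeeping the divisibility constraints on homomorphisms between cyclic $p$-groups of unequal orders — to be the main technical obstacle; everything preceding it is soft.
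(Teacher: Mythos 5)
First, note that the paper does not actually prove this proposition: it is imported verbatim as Proposition~1.1 of \cite{AT}, so there is no internal argument to compare yours against. Judged on its own terms, your setup is sound as far as it goes: the reduction to $p$-groups via the primary decomposition is correct, and your lemma that a generator of $G/H\cong C_{p^a}$ admits a lift $g$ of order $\exp(G)$ is correctly proved (the contrapositive argument forcing $p^{a-1}G\subseteq H$ checks out), as do the identities $H+\langle g\rangle=G$, $H\cap\langle g\rangle=\langle p^ag\rangle$, the graph description $H=H_\phi$, and the formulas for how the three families of automorphisms act on $\phi$.

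The problem is that everything you have verified is, as you say yourself, the soft part. The entire content of the proposition is the final claim that the abstract isomorphism type of $H_\phi$ determines the orbit of $\phi$ under translations, $\Aut(G')$ and $\Aut(Q)$, and this you only sketch and explicitly defer (``I expect this last normal-form computation \dots to be the main technical obstacle''). That is not a proof but a plan. Moreover, the first step of the plan is false as stated: it is not true that $\Aut(G')$ can always concentrate $\phi$ on a single cyclic summand. Take $G'=\langle x_1\rangle\oplus\langle x_2\rangle\cong C_{p^3}\oplus C_p$, $Q=\langle u\rangle\cong C_{p^3}$, and $\phi(x_1)=pu$, $\phi(x_2)=p^2u$. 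Any $\alpha\in\Aut(G')$ has the form $\alpha(x_1)=ax_1+cx_2$, $\alpha(x_2)=p^2dx_1+bx_2$ with $a,b$ units, whence $(\phi\circ\alpha)(x_1)=p(a+cp)u\neq 0$ and $(\phi\circ\alpha)(x_2)=p^2bu\neq 0$; no automorphism of $G'$ kills either coordinate. Whether the translations by $q_*\Hom(G',\langle g\rangle)$ rescue the normal form in the cases that actually arise (where $|g|=\exp(G)$ bounds the orders in $G'$) is exactly the divisibility bookkeeping you have postponed, and the subsequent claim that the resulting normal form is recoverable from the isomorphism type of $H_\phi$ alone is asserted without any computation. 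Until that orbit classification is carried out and matched against isomorphism types, the converse direction --- which is the whole theorem --- remains unproved; as it stands you should simply cite \cite{AT} as the paper does.
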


Hence, the number of non-$G$-equivalent codes in $\mathcal{I}_{\FF_qG}$ is equal to the number of non-isomorphic 
subgroups in $\mathcal{S}_{cc}(G) \cup \{G\}$.
 Letting $\tau(G)$ denote the number of divisors of the exponent of $G$, it is easy to see that the number of non-isomorphic subgroups 
 in $\mathcal{S}_{cc}(G) \cup \{G\}$ is at least $\tau(G)$. Hence, Theorem 1.7 in \cite{AT} can be written as follows. 
 
\begin{thm}[Theorem 1.7 \cite{AT}] Let $G$ be a finite abelian group. The number of non-isomorphic subgroups in $\mathcal{S}_{cc}(G) \cup \{G\}$ is equal to 
the number of divisors of exponent of $G$ if and only if for each prime $p$ dividing the order of $G$, the Sylow $p$-subgroups of $G$ are homocyclic. 
\end{thm}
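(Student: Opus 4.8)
The plan is to reduce the whole statement to abelian $p$-groups and then argue directly. First I would invoke the primary decomposition: write $G=G_{p_1}\times\cdots\times G_{p_k}$ with $\exp(G_{p_i})=p_i^{\,n_i}$, so that $\tau(G)=\prod_{i=1}^k(n_i+1)$. Every subgroup $H\le G$ splits as $H=\prod_i(H\cap G_{p_i})$, and $G/H\cong\prod_i G_{p_i}/(H\cap G_{p_i})$; since these factors have pairwise coprime order, $G/H$ is cyclic if and only if each $G_{p_i}/(H\cap G_{p_i})$ is cyclic. Hence $\mathcal{S}_{cc}(G)\cup\{G\}$ is exactly the set of products $H_1\times\cdots\times H_k$ with $H_i\in\mathcal{S}_{cc}(G_{p_i})\cup\{G_{p_i}\}$, and two such products are isomorphic iff the corresponding factors are (uniqueness of primary decomposition). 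Writing $N(X)$ for the number of isomorphism classes in $\mathcal{S}_{cc}(X)\cup\{X\}$, this gives $N(G)=\prod_i N(G_{p_i})$, so it suffices to prove: for an abelian $p$-group $P$ with $\exp(P)=p^n$ one has $N(P)\ge n+1$, with equality if and only if $P$ is homocyclic.

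The inequality $N(P)\ge n+1$ is exactly the ``easy'' bound already noted in the text (for each $0\le i\le n$ there is a subgroup in $\mathcal{S}_{cc}(P)\cup\{P\}$ of index $p^i$, and these have distinct orders). For the equality, when $P=C_{p^n}^k$ is homocyclic I would show $N(P)\le n+1$ as follows: any $H\in\mathcal{S}_{cc}(P)\cup\{P\}$ has $P/H\cong C_{p^i}$ for some $0\le i\le n$ (a cyclic quotient of exponent dividing $p^n$), and for $i\ge 1$ one writes $H=\ker\phi$ with a surjection $\phi\colon(\ZZ/p^n)^k\to\ZZ/p^i$; surjectivity forces one coefficient of $\phi$ to be a unit of $\ZZ/p^n$, so after a change of $\ZZ/p^n$-basis (an automorphism of $P$) we may take $\phi$ to be the first-coordinate projection followed by $\ZZ/p^n\to\ZZ/p^i$, giving $H\cong C_{p^{n-i}}\times C_{p^n}^{k-1}$, which depends only on $i$. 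Together with $H=P$ (for $i=0$) this yields exactly $n+1$ isomorphism classes, so $N(P)=n+1$.

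Conversely, if $P$ is not homocyclic I would exhibit two non-isomorphic cocyclic subgroups of the \emph{same} index, forcing $N(P)\ge n+2$. Non-homocyclicity gives $j_1<j_2$ with $P\cong C_{p^{j_1}}\times C_{p^{j_2}}\times P_0$ for some $P_0$; set $H_1:=pC_{p^{j_1}}\times C_{p^{j_2}}\times P_0\cong C_{p^{j_1-1}}\times C_{p^{j_2}}\times P_0$ and $H_2:=C_{p^{j_1}}\times pC_{p^{j_2}}\times P_0\cong C_{p^{j_1}}\times C_{p^{j_2-1}}\times P_0$. Both have quotient $C_p$, hence lie in $\mathcal{S}_{cc}(P)$, but their invariant types differ because $j_1<j_2$ (in the boundary case $j_1=1$ the ranks of $H_1$ and $H_2$ already differ), so $H_1\not\cong H_2$. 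Thus there are at least two isomorphism classes of index-$p$ subgroups, hence $N(P)\ge n+2>n+1$. Combining with the first paragraph, $N(G)=\prod_i N(G_{p_i})\ge\prod_i(n_i+1)=\tau(G)$, with equality precisely when $N(G_{p_i})=n_i+1$ for every $i$, i.e. when every Sylow subgroup of $G$ is homocyclic.

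The main obstacle is the non-homocyclic direction: one must verify carefully that the two cocyclic subgroups produced are genuinely non-isomorphic finite abelian groups, which comes down to cancelling the common summand $P_0$ and comparing elementary-divisor types, with a little attention to the edge case $j_1=1$; the basis-normalization used in the homocyclic direction is routine linear algebra over $\ZZ/p^n$, and the primary-decomposition bookkeeping is straightforward.
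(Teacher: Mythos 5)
This statement is not proved in the paper at all: it is imported verbatim as Theorem 1.7 of \cite{AT} and used as a black box, the only in-text remark being the easy lower bound that the number of isomorphism classes in $\mathcal{S}_{cc}(G)\cup\{G\}$ is at least $\tau(G)$. So there is no in-paper proof to compare with; what you have written is a self-contained proof of the cited result, and I find it correct. Your primary-decomposition bookkeeping is sound: since the Sylow factors have coprime orders, every subgroup splits as $H=\prod_i(H\cap G_{p_i})$ and $G/H$ is cyclic exactly when each factor quotient is, which is precisely the decomposition of cocyclic subgroups the paper itself records in Section 2; uniqueness of primary decomposition then gives $N(G)=\prod_i N(G_{p_i})$, and since each $N(G_{p_i})\ge n_i+1$, equality with $\tau(G)=\prod_i(n_i+1)$ forces equality factorwise, so the reduction to $p$-groups is legitimate. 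In the homocyclic direction, the normalization of a surjection $(\ZZ/p^n)^k\to\ZZ/p^i$ works as you say: some coefficient is a unit modulo $p^i$ (this is where the coefficients actually live), it lifts to a unit modulo $p^n$, and the corresponding row completes to an invertible matrix over $\ZZ/p^n$, so every such kernel is isomorphic to $C_{p^{n-i}}\times C_{p^n}^{k-1}$ and $N(P)=n+1$. In the non-homocyclic direction, your two index-$p$ subgroups are genuinely non-isomorphic: cancellation of the common summand $P_0$ is valid for finite abelian groups (uniqueness of elementary divisors), and the multisets $\{j_1-1,\,j_2\}$ and $\{j_1,\,j_2-1\}$ with $j_1<j_2$ never coincide, the rank comparison disposing of the edge case $j_1=1$; together with the $n+1$ classes of distinct orders this gives $N(P)\ge n+2$. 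I see no gap; the only cosmetic slip is calling the coefficient ``a unit of $\ZZ/p^n$'' before lifting, which is harmless.
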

Recall that a {\bf{homocyclic group}} is a direct product of pairwise isomorphic cyclic groups. The theorem above implies the following result.

\begin{cor}\label{nonisoexist}

\textbf{(i)} Let $G$ be a finite abelian group. If one of Sylow $p$-subgroups of $G$ is not homocyclic, then there exist two cocyclic subgroups $H$ and $K$ such that $H$ is not isomorphic to 
$K$ and $|H|=|K|$. 

 \textbf{(ii)} Suppose that Sylow $p$-subgroups of $G$ are homocyclic and $H,K$  are cocyclic subgroups of $G$, then $H$ is isomorphic to $K$ if and only if $|H|=|K|$.
\end{cor}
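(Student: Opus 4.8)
The plan is to derive both parts from Theorem 1.7 of \cite{AT} by a short counting argument. The preliminary point I would record is the mechanism behind the inequality ``at least $\tau(G)$'' quoted above: if $H\in\mathcal{S}_{cc}(G)$, then $G/H$ is cyclic and, being a quotient of the abelian group $G$, has order dividing $\exp(G)$; conversely, writing $G=C_{\exp(G)}\times G'$ shows that for every divisor $m$ of $\exp(G)$ there is a subgroup $H\leq G$ with $[G:H]=m$, and this $H$ is cocyclic when $m>1$ and equals $G$ when $m=1$. Hence the orders occurring among the members of $\mathcal{S}_{cc}(G)\cup\{G\}$ are exactly the $\tau(G)$ numbers $|G|/m$ with $m\mid\exp(G)$, and $G$ is the only member of this family whose order is the maximal value $|G|$.

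For part (i), assume some Sylow $p$-subgroup of $G$ is not homocyclic. By Theorem 1.7 of \cite{AT}, the number $N$ of isomorphism classes of subgroups in $\mathcal{S}_{cc}(G)\cup\{G\}$ differs from $\tau(G)$; since it is always at least $\tau(G)$, in fact $N>\tau(G)$. The assignment sending an isomorphism class to the common order of its members is a map from a set of size $N$ into the set of $\tau(G)$ possible orders, so it cannot be injective: there are two non-isomorphic members of $\mathcal{S}_{cc}(G)\cup\{G\}$ of the same order. Since $G$ is the unique member of order $|G|$, neither of them is $G$, so both are cocyclic subgroups, which is exactly the assertion of (i).

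For part (ii), assume every Sylow $p$-subgroup of $G$ is homocyclic. By Theorem 1.7 of \cite{AT} we have $N=\tau(G)$, and by the preliminary observation this is also the number of distinct orders realized in $\mathcal{S}_{cc}(G)\cup\{G\}$. As each realized order is the order of at least one isomorphism class, the ``order'' map is now a bijection between isomorphism classes and realized orders; in particular, two members of equal order lie in the same class. Applying this to cocyclic subgroups $H,K$ with $|H|=|K|$ gives $H\cong K$, while the reverse implication is immediate.

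The only substantive ingredient is Theorem 1.7 of \cite{AT}, which is available to us; everything else is pigeonhole bookkeeping. The single place that calls for a moment's care is the final step of (i), where one must upgrade ``two non-isomorphic members of the family of equal order'' to ``two non-isomorphic \emph{cocyclic} subgroups of equal order''---and this is precisely the step that uses that $G$ occupies the top order alone.
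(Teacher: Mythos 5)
Your argument is correct and follows essentially the same route as the paper, which simply asserts that the corollary is implied by Theorem 1.7 of \cite{AT}; your pigeonhole bookkeeping (identifying the realized orders as the $\tau(G)$ numbers $|G|/m$ with $m\mid\exp(G)$, and noting $G$ alone has order $|G|$) just makes explicit the counting the paper leaves implicit.
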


\section{The case for $p$-groups} 
\begin{thm}\label{main1A} Let $p$ be an odd prime and let $q$ be a prime power where $(q, p)=1$. Assume that $G$ is an abelian $p$-group. Then the following are equivalent.

\textbf{(i)} Condition A holds for $\mathcal{I}_{\FF_q G}$;

\textbf{(ii)} Condition B holds for $\mathcal{I}_{\FF_q G}$;

\textbf{(iii)} $G$ is homocyclic.

\end{thm}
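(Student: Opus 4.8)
The plan is to establish the cycle $(iii)\Rightarrow(i)\Rightarrow(ii)\Rightarrow(iii)$. Of these, $(i)\Rightarrow(ii)$ is immediate: two codes with the same weight distribution have the same minimum nonzero weight, hence the same weight, so Condition A forces them to be $G$-equivalent.

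The arithmetic heart of the argument is a computation of the weight, and in fact of the whole weight distribution, of $I_H$ for a cocyclic subgroup $H<G$. Since $e_H=\widehat{H}-\widehat{H^*}$ satisfies $\widehat{H}e_H=e_H$, we have $I_H\subseteq(\FF_q G)\widehat{H}$, so Lemma \ref{reduction1} applies with $T=H$. Under the isomorphism $\varphi_H\colon(\FF_q G)\widehat{H}\to\FF_q(G/H)$ one finds $\varphi_H(e_H)=\varphi_H(\widehat{H})-\varphi_H(\widehat{H^*})=\overline{1}-\widehat{H^*/H}$, where $H^*/H$ is the unique subgroup of order $p$ of the cyclic group $G/H$; hence $\varphi_H(I_H)=\FF_q(G/H)\bigl(\overline{1}-\widehat{H^*/H}\bigr)$. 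Fixing a group isomorphism $G/H\cong C_{p^m}=\langle x\rangle$ (with $p^m=[G:H]$) carrying $H^*/H$ to $C_p=\langle x^{p^{m-1}}\rangle$, this code becomes $\FF_q C_{p^m}(1-\widehat{C_p})$. The augmentation map annihilates $1-\widehat{C_p}$, so this code lies in the augmentation ideal of $\FF_q C_{p^m}$ and therefore has minimum weight at least $2$; on the other hand $x^{p^{m-1}}-1=(x^{p^{m-1}}-1)(1-\widehat{C_p})$ is a codeword of weight $2$. By Lemma \ref{reduction1} it follows that $w(I_H)=2|H|$, and, since $w(\alpha)=|H|\,w(\varphi_H(\alpha))$ for every $\alpha\in I_H$, the full weight distribution of $I_H$ is determined by the index $[G:H]$ (and $q$). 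The remaining code $I_G=(\FF_q G)\widehat{G}$ is one-dimensional with $w(I_G)=|G|$.

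To prove $(iii)\Rightarrow(i)$, suppose $G$ is homocyclic and let $I_1,I_2\in\mathcal{I}_{\FF_q G}$ have equal weight. If $I_1=I_G$, then $w(I_2)=|G|$, and $I_2$ cannot be $I_H$ for a cocyclic $H<G$, since that would give $2|H|=|G|$ with $|G|$ a power of the odd prime $p$; so $I_2=I_G=I_1$. (This is the only point at which the hypothesis that $p$ is odd is used.) Otherwise $I_1=I_H$ and $I_2=I_K$ for cocyclic $H,K<G$, and $2|H|=w(I_H)=w(I_K)=2|K|$ gives $|H|=|K|$. As $G$ is its own Sylow $p$-subgroup and is homocyclic, Corollary \ref{nonisoexist}(ii) yields $H\cong K$, hence $H$ and $K$ are $G$-isomorphic by Proposition \ref{prop11}, hence $I_H$ and $I_K$ are $G$-equivalent by Lemma \ref{equivGiso}. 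Thus Condition A holds.

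Finally, $(ii)\Rightarrow(iii)$ is proved by contraposition. If $G$ is not homocyclic, Corollary \ref{nonisoexist}(i) gives cocyclic subgroups $H,K<G$ with $|H|=|K|$ and $H\not\cong K$. Then $[G:H]=[G:K]$, so by the computation above $I_H$ and $I_K$ have the same weight distribution, while $H\not\cong K$ makes $I_H$ and $I_K$ non-$G$-equivalent by Proposition \ref{prop11} and Lemma \ref{equivGiso}; hence Condition B fails. The step I expect to be the main obstacle is the weight analysis of the second paragraph: pinning down that the minimum weight of $\FF_q C_{p^m}(1-\widehat{C_p})$ equals $2$ for every admissible $q$, and upgrading the weight-scaling in Lemma \ref{reduction1} to the assertion that the entire weight distribution of $I_H$ depends only on the index $[G:H]$.
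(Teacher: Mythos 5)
Your proof is correct and takes essentially the same route as the paper: the same cycle of implications, the same use of Corollary \ref{nonisoexist}, Proposition \ref{prop11} and Lemma \ref{equivGiso}, and the same reduction of $I_H$ via $\varphi_H$ and Lemma \ref{reduction1} to the code $\FF_q C_{p^m}(1-\widehat{C_p})$, which shows the weight distribution depends only on $[G:H]$. The only deviation is that you verify $w(I_H)=2|H|$ directly (the augmentation-ideal lower bound together with the weight-two codeword $x^{p^{m-1}}-1$), where the paper simply cites Proposition 2.1 of \cite{DFM}; your self-contained computation is correct.
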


\begin{proof} $\textbf{(i)}\Rightarrow \textbf{(ii)}$ Assume Condition A holds for $\mathcal{I}_{\FF_q G}$. We need to prove that if weight distributions of $I_{H_1}, I_{H_2}\in \mathcal{I}_{\FF_q G}$ are identical, then $I_{H_1}$ and $I_{H_2}$ are $G$-equivalent. Assume that $I_{H_1}$ is not $G$-equivalent to $I_{H_2}$. As Condition A holds we have $w(I_{H_1})\neq w(I_{H_2})$. This means their weight distributions are different which gives a contradiction.

$\textbf{(ii)}\Rightarrow \textbf{(iii)}$  Suppose that Condition B holds for $\mathcal{I}_{\FF_q G}$. Suppose to the
 contrary that $G$ is not homocyclic. Then by Corollary \ref{nonisoexist} (i) there exists two non-isomorphic
cocyclic subgroups $H_1$ and $H_2$ of $G$ such that $|H_1|=|H_2|$. So  we have that $G/H_1 \cong G/H_2 \cong C_{p^r}$ for some
$r \geq 1$.
By Proposition \ref{prop11}, $H_1$ and $H_2$ are not $G$-isomorphic. Then by Lemma \ref{equivGiso}, $I_{H_1}$ and $I_{H_2}$ are
not $G$-equivalent. But since Condition B holds for $\mathcal{I}_{\FF_q G}$, the weight distribution of these codes
are different. Moreover since $e_{H_i} \widehat{H_i}=e_{H_i}$, we have that $I_{H_i} \subseteq (\FF_q G)\widehat{H_i}$ for $i=1, 2$. By considering the isomorphism $\varphi_{H_i}: (\FF_q G) \widehat{H_i} \to \FF_q(G/H_i)$
we get that $$\varphi_{H_i}(e_{H_i})=\widehat{H_i/H_i}-\widehat{H_i^*/H_i}=\widehat{1}-\widehat{C_p} \in \FF_qC_{p^r}$$ for $i=1, 2$.
Hence, $\varphi_{H_i}(I_{H_i})\cong (\FF_qC_{p^r}) (\widehat{1}-\widehat{C_p})$ for $i=1, 2$.  By Lemma \ref{reduction1}, the weight distributions
of $I_{H_1}$ and $I_{H_2}$ are identical. This is a contradiction. Therefore, $G$ is homocyclic.

$\textbf{(iii)}\Rightarrow \textbf{(i)}$
Assume $G$ is homocyclic and assume that for  codes $I_1, I_2 \in \mathcal{I}_{\FF_q G} $ we have $w(I_1)=w(I_2)$. Note that $w((\FF_qG)\widehat{G})=|G|$ and by Proposition 2.1 in \cite{DFM}, $w(\FF_qGe_H)=2|H|$ whenever $H$ is a cocyclic subgroup of $G$. Thus, since $|G|$ is odd, either both of $I_1$ and $I_2$ are $(\FF_qG)\widehat{G}$  or  there are  cocylic subgroups $H_1, H_2$ of $G$, so that 
$I_1=(\FF_q G) e_{H_1}$ and $I_2=(\FF_q G) e_{H_2}$. If the second case holds,  Proposition 2.1 in \cite{DFM} implies that $w(I_1)=2|H_1|=2|H_2|=w(I_2)$. As $H_1$ and $H_2$ are cocyclic subgroups of a homocyclic group and  $|H_1|=|H_2|$, we get that $H_1$ is isomorphic to $H_2$ by Corollary \ref{nonisoexist} (ii). Then Proposition \ref{prop11} implies that $H_1$ and $H_2$  are $G$-isomorphic. Hence, $I_1$ is $G$-equivalent to $I_2$ by Lemma \ref{equivGiso}.

\end{proof}

In the proof of the part $\textbf{(iii)}\Rightarrow \textbf{(i)}$ the condition on the prime $p$ is necessary  as the following example shows.

\begin{example}\label{prime2}
Let $G=\langle a \rangle\cong C_2$, and consider $\FF_3 G$. Then consider the ideals $I_1$ and $I_2$
generated by the idempotents corresponding to subgroups $G$ and $1$, which are equal to $2+2 a$ and $2+a$, respectively.
Then $I_1=\{ 0, 1+a, 2+2a\}$ and $I_2=\{0, 2+a, 1+2a\}$. Now, it is easy to see that the weights of $I_1$ and $I_2$
are equal but they are not $G$-equivalent by Lemma \ref{equivGiso}. Note
 that, $I_1$ and $I_2$ have identical
weight distribution and equal dimension. 

\end{example}

\begin{cor}{\label{qgenerator}}
Let $p$ be an odd prime, $G$ an abelian $p$-group with exponent $p^n$ and $q$ a prime power where $\langle q \rangle =U(\mathbb{Z}_{p^n})$. Then the following are equivalent.

\textbf{(i)} Condition A holds for  the set of minimal ideals of $\FF_q G$;

\textbf{(ii)} Condition B holds for the set of minimal ideals of $\FF_q G$;

\textbf{(iii)} $G$ is homocyclic.

\end{cor}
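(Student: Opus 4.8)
The plan is to obtain this corollary as an immediate specialization of Theorem \ref{main1A}, with no new work beyond invoking the description of minimal ideals already recalled in Section \ref{Sec:2}. Concretely, the hypothesis $\langle q\rangle = U(\mathbb{Z}_{p^n})$ is exactly the hypothesis under which Theorem 4.1 of \cite{FM} applies, and that result says precisely that the family $\mathcal{I}_{\FF_q G}$ coincides with the set of all minimal ideals of $\FF_q G$. So I would begin the proof by invoking this: under the stated hypothesis, $\mathcal{I}_{\FF_q G}$ \emph{is} the set of minimal codes of $\FF_q G$.

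Given that identification, ``Condition A holds for the set of minimal ideals of $\FF_q G$'' and ``Condition A holds for $\mathcal{I}_{\FF_q G}$'' are literally the same assertion, and likewise for Condition B. I would then check that the remaining hypotheses of Theorem \ref{main1A} are met: $p$ is odd by assumption, and $(q,p)=1$ is automatic, since $q$ generates $U(\mathbb{Z}_{p^n})$ and is therefore coprime to $p^n$, hence to $p$; in particular $\FF_q G$ is semisimple. Applying Theorem \ref{main1A} to the set $\mathcal{I}_{\FF_q G}$ then yields the equivalence of (i), (ii), (iii) verbatim.

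There is essentially no obstacle here: the content of the corollary is entirely carried by Theorem \ref{main1A}, and the only role of the extra hypothesis on $q$ is to guarantee, through \cite{FM}, that the abstract family $\mathcal{I}_{\FF_q G}$ admits the concrete description ``set of minimal ideals'' that makes the corollary worth recording separately, in view of the discussion of Miller's theorem in the introduction. If one wished to be slightly more self-contained, I would also recall why $\mathcal{I}_{\FF_q G}$ consists of minimal ideals in this case: each $I_H$ is generated by a primitive idempotent because $G/H$ is cyclic of $p$-power order dividing $p^n$ and $q$ generates $U(\mathbb{Z}_{p^n})$, so the idempotent $e_H$ (equivalently $\widehat{1}-\widehat{C_p}$ in $\FF_q(G/H)$ after applying $\varphi_H$) admits no proper refinement; but citing \cite{FM} exactly as in Section \ref{Sec:2} suffices and keeps the argument short.
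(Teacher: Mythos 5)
Your proposal is correct and matches the paper's proof exactly: the paper also deduces the corollary by citing Theorem 4.1 of \cite{FM} to identify $\mathcal{I}_{\FF_q G}$ with the set of minimal ideals and then applying Theorem \ref{main1A}. Your extra remarks on coprimality and on why the $I_H$ are minimal are harmless elaborations of the same argument.
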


\begin{proof}
Follows from Theorem 4.1 of \cite{FM} and Theorem \ref{main1A}.
\end{proof}

\begin{thm}\label{dimension} Let $p$ be an odd prime,   $G$  an abelian $p$-group and $\FF_q$ a finite field of $q$ elements such that $(p,q)=1$. Condition C holds for ${\mathcal{I}}_{\FF_q G}$ if and only if $G$ is homocyclic.
\end{thm}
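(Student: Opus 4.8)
The plan is to follow the template of the proof of Theorem~\ref{main1A}, replacing the weight of a code by its $\FF_q$-dimension and reducing everything to an explicit dimension formula for the ideals $I_H$. The first step is to record that formula. For a cocyclic subgroup $H<G$ we have $I_H=(\FF_q G)e_H\subseteq(\FF_q G)\widehat H$ since $e_H\widehat H=e_H$, and under the ring isomorphism $\varphi_H\colon(\FF_q G)\widehat H\to\FF_q(G/H)$ the idempotent $e_H=\widehat H-\widehat{H^*}$ is sent to $\widehat{1}-\widehat{C_p}\in\FF_q(G/H)$, where $G/H\cong C_{p^r}$ and $C_p=H^*/H$. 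Since $\widehat{C_p}$ is an idempotent and $\FF_q(C_{p^r})\widehat{C_p}\cong\FF_q(C_{p^{r-1}})$, the complementary summand $\FF_q(C_{p^r})(\widehat 1-\widehat{C_p})$ has $\FF_q$-dimension $p^r-p^{r-1}$, and as $\varphi_H$ is an $\FF_q$-algebra isomorphism this yields
\[
\dim_{\FF_q} I_H=p^{r-1}(p-1)=\frac{p-1}{p}\cdot\frac{|G|}{|H|},\qquad \dim_{\FF_q} I_G=\dim_{\FF_q}(\FF_q G)\widehat G=1 .
\]
Two consequences matter: with $|G|$ fixed, $\dim_{\FF_q}I_H$ determines $|H|$; and since $p$ is odd we have $p-1\ge 2$, so $\dim_{\FF_q}I_H>1$ for every cocyclic $H$, whence any code in $\mathcal{I}_{\FF_q G}$ of dimension $1$ must be $I_G$.

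For the implication ``$G$ homocyclic $\Rightarrow$ Condition~C'' I would take $I_1,I_2\in\mathcal{I}_{\FF_q G}$ with $\dim_{\FF_q}I_1=\dim_{\FF_q}I_2$. If this common value is $1$, then $I_1=I_2=I_G$ by the previous remark, hence trivially $G$-equivalent. Otherwise $I_1=I_{H_1}$ and $I_2=I_{H_2}$ for cocyclic $H_1,H_2$, and the dimension formula forces $|G|/|H_1|=|G|/|H_2|$, i.e.\ $|H_1|=|H_2|$. Since $G$ is homocyclic, Corollary~\ref{nonisoexist}(ii) gives $H_1\cong H_2$; Proposition~\ref{prop11} promotes this to a $G$-isomorphism; and Lemma~\ref{equivGiso} yields that $I_{H_1}$ and $I_{H_2}$ are $G$-equivalent. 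Conversely, for ``$G$ not homocyclic $\Rightarrow$ Condition~C fails'' I would invoke Corollary~\ref{nonisoexist}(i) to get non-isomorphic cocyclic subgroups $H_1,H_2$ with $|H_1|=|H_2|$; the dimension formula gives $\dim_{\FF_q}I_{H_1}=\dim_{\FF_q}I_{H_2}$, while Proposition~\ref{prop11} and Lemma~\ref{equivGiso} show $I_{H_1}$ and $I_{H_2}$ are not $G$-equivalent, contradicting Condition~C. Combining the two implications, and recalling that the forward direction of Condition~C always holds, proves the theorem.

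I do not expect a genuine obstacle; the content is essentially the dimension bookkeeping above together with the already-established equivalence between $G$-equivalence of the $I_H$ and $G$-isomorphism of the $H$. The one point requiring care is the hypothesis that $p$ is odd: it is exactly what prevents a cocyclic code from having dimension $1$, so that $\dim_{\FF_q}I_G=1$ is not shared by any $I_H$. Without it the ``$\Leftarrow$'' direction genuinely breaks down, as Example~\ref{prime2} illustrates with $G=C_2$, where an index-two subgroup $H$ gives $\dim_{\FF_q}I_H=1=\dim_{\FF_q}I_G$ but $I_H$ is not $G$-equivalent to $I_G$. I would also double-check the normalization of $\varphi_H$ on idempotents, namely $\widehat H\mapsto 1$ and $\widehat{H^*}\mapsto\widehat{H^*/H}$, to be sure the count $p^r-p^{r-1}$ is correct.
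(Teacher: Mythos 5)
Your proposal is correct and follows essentially the same route as the paper: reduce equality of dimensions to equality of $|H_1|=|H_2|$ via the formula $\dim I_H=|G/H|-|G/H^*|=p^{r-1}(p-1)$, then conclude with Corollary~\ref{nonisoexist}, Proposition~\ref{prop11} and Lemma~\ref{equivGiso}, and run the same construction in reverse for the converse. The only cosmetic differences are that you derive the dimension formula directly through $\varphi_H$ rather than citing Proposition 2.1 of \cite{FM}, and you separate $I_G$ from the cocyclic codes by the inequality $\dim I_H>1$ where the paper uses the parity of the dimensions; both uses of the hypothesis that $p$ is odd are equivalent here.
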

\begin{proof} Assume Condition C holds and suppose to the contrary that $G$ is not homocyclic. By Corollary \ref{nonisoexist} (i), 
 there are non-isomorphic cocyclic subgroups  $H_1$ and  $H_2$ of $G$  which have equal order. Let $I_1=\FF_q G e_{H_1}$ and $I_2=(\FF_q G) e_{H_2}$ be the corresponding ideals in $\mathcal{I}_{\FF_q G}$. Then by Lemma \ref{equivGiso}, $I_1$ is not $G$-equivalent to $I_2$.  By using Proposition 2.1 in \cite{FM} we have ${\rm{dim}}(I_1)={\rm{dim}}(I_2)$ which means Condition C does not hold for $\mathcal{I}_{\FF_q G}$.

Assume $G$ is homocyclic. Let $I_1$ and $I_2$ be two ideals of the same dimension in $\mathcal{I}_{\FF_q G}$. The  dimension of   $(\FF_qG)\widehat{G}$ is equal to  one and for a cocyclic subgroup $H$, the ideal $(\FF_qG)e_H$ is even dimensional as we have  ${\rm{dim}}((\FF_qG)e_H)=|G/H|-|G/{H}^*|$  by Proposition 2.1 in \cite{FM}.  Since $G$ has odd order, either both $I_1$ and $I_2$ are $(\FF_qG)\widehat{G}$ or both of them are  different from  $(\FF_qG)\widehat{G}$. If the second case holds, there are cocyclic subgroups $H_1$ and $H_2$ of $G$ so that $I_1=(\FF_qG)e_{H_1}$ and $I_2=(\FF_qG)e_{H_2}$. Then by using Proposition 2.1 in \cite{FM} we have ${\rm{dim}}(I_1)=|G/H_1|-|G/{H_1}^*|=|G/H_2|-|G/{H_2}^*|={\rm{dim}}(I_2)$. As $|H_i^*|=p|H_i|$ we can easily conclude that $|H_1|=|H_2|$. In this case as $H_1$, $H_2$ are cocyclic subgroups in a homocyclic group, Corollary \ref{nonisoexist} (ii) implies that they are isomorphic. hence they are $G$-isomorphic by Proposition \ref{prop11}. As a result, $I_1$ and $I_2$ are $G$-equivalent.
\end{proof}
\begin{cor}\label{qgenetarordim}
Let $p$ be an odd prime, $G$ an abelian $p$-group with exponent $p^n$ and $q$ a prime power where $\langle q \rangle =U(\mathbb{Z}_{p^n})$.
Then Condition C holds on the set of all minimal codes of $\FF_q G$ if and only if $G$ is homocyclic.
\end{cor}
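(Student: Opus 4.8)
The plan is to reduce this corollary to Theorem~\ref{dimension} exactly as Corollary~\ref{qgenerator} was reduced to Theorem~\ref{main1A}. The only thing that needs justification is that, under the hypothesis $\langle q\rangle = U(\mathbb{Z}_{p^n})$ where $p^n$ is the exponent of $G$, the set $\mathcal{I}_{\FF_q G}$ coincides with the set of all minimal ideals of $\FF_q G$. This is precisely the content of Theorem 4.1 of \cite{FM}, which was already recalled in the Preliminaries: when $q$ generates $U(\mathbb{Z}_{p^n})$, the idempotents $\{e_H \mid H \in \mathcal{S}_{cc}(G)\} \cup \{\widehat{G}\}$ are the primitive idempotents of $\FF_q G$, so the ideals $I_H = (\FF_q G)e_H$ and $I_G = (\FF_q G)\widehat{G}$ are exactly the minimal ideals.

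So the proof is a one-line invocation: first I would note that by Theorem 4.1 of \cite{FM} the hypothesis on $q$ forces $\mathcal{I}_{\FF_q G}$ to be the set of all minimal ideals of $\FF_q G$. Then ``Condition C holds on the set of all minimal codes of $\FF_q G$'' is literally the statement ``Condition C holds for $\mathcal{I}_{\FF_q G}$,'' and Theorem~\ref{dimension} tells us this is equivalent to $G$ being homocyclic. That finishes it.

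There is essentially no obstacle here; the corollary is a bookkeeping consequence. The only mild point of care is making sure the exponent hypothesis in Theorem 4.1 of \cite{FM} matches the exponent hypothesis in the corollary (both use $p^n = \exp(G)$ with $q$ a generator of $U(\mathbb{Z}_{p^n})$), which it does. Concretely I would write:

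\begin{proof}
Since $q$ is a prime power with $\langle q\rangle = U(\mathbb{Z}_{p^n})$ and $p^n$ is the exponent of $G$, Theorem 4.1 of \cite{FM} implies that $\mathcal{I}_{\FF_q G}$ is precisely the set of minimal ideals of $\FF_q G$. Hence Condition C holds on the set of all minimal codes of $\FF_q G$ if and only if Condition C holds for $\mathcal{I}_{\FF_q G}$, which by Theorem~\ref{dimension} is equivalent to $G$ being homocyclic.
\end{proof}
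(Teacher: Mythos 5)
Your proof is correct and is exactly the paper's own argument: the paper's proof of this corollary is the one-line "Follows from Theorem 4.1 in \cite{FM} and Theorem \ref{dimension}," and you supply the same reduction with the identification of $\mathcal{I}_{\FF_q G}$ with the minimal ideals spelled out. No issues.
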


\begin{proof}
Follows from Theorem 4.1 in \cite{FM} and Theorem \ref{dimension}.
\end{proof}

We should emphasize here that if we take the field as a splitting field for an abelian $p$-group which is homocyclic, Corollary \ref{qgenerator} and Corollary \ref{qgenetarordim} are not true anymore.
\begin{example} Let $G=C_3=<g>$ be the cyclic group of order  $3$.  Consider $\FF_4=\{0,1,\alpha, {\alpha}^2 \}$ where $\alpha$ is a primitive third root of unity.  $\FF_4$ is a splitting field for $G$. Then the character table of $G$ is

$$
\begin{tabular}{l|c|c|c|l}
$G$ & $1$         &$ g  $      & $g^2 $            &                            \\ \hline
$\chi_0$    & $1$ & $1$ & $1$     &    \\ \hline
$\chi_1$    & $1$ & $\alpha$ & ${\alpha}^2$ &    \\ \hline
$\chi_2$    & $1$ & ${\alpha}^2$ & $\alpha$  &   \\ \hline 

\end{tabular}
$$
 The primitive idempotents will be in the form $e_{\chi_i}=\frac{1}{3}\sum_{g\in G} \chi_i(g)g$ for $i=1,2,3.$
Then $e_{\chi_0}=1+g+g^2$ and $e_{\chi_1}=1+\alpha g+{\alpha}^2 g^2$. The corresponding minimal ideals are $$I_0=\{0,1+g+g^2, \alpha+\alpha g+\alpha g^2,{\alpha}^2+  {\alpha}^2 g+ {\alpha}^2 g^2 \}$$  
$$I_1=\{0,1+\alpha g+ {\alpha}^2 g^2,\alpha+{\alpha}^2 g+ g^2,{\alpha}^2+ g+ {\alpha}  g^2 \}$$

Now $w(I_0)=3=w(I_1)$, ${\rm{dim}}(I_1)={\rm{dim}}(I_2)$ and weight distributions of $I_0$ and $I_1$ are identical. 
As any automorphism of $G$ fixes any element of $I_0$, $I_1$ and $I_0$ are not $G$-equivalent. So none of the Conditions $A, B, C$ holds for this example.

\end{example}

 We have a natural question:

\begin{que}\label{que} Let $G$ be  an abelian $p$-group and $\FF$  a finite splitting field for $G$ of characteristic coprime to the  order of $G$. What are the conditions on $G$ so that minimal ideals in $\FF G$ satisfies Condition A,  Condition B and Condition C hold ?

\end{que}

Let $\mathcal{I}_{min}(\FF G)$ be the set of minimal ideals in $\mathbb{F}G$ generated by the primitive  idempotents corresponding to the non-trivial irreducible characters. That is we disclude the minimal ideal generated by $e_{\chi_0}$ where $\chi_0$ is the trivial character. We give an answer for the Question \ref{que} as follows.

\begin{thm} Let $G$ be an abelian $p$-group and let $\FF$ be a finite spliting field for $G$ of characteristic coprime to the order of $G$. Then the following are equivalent.

\textbf{(i)} Condition A holds on $\mathcal{I}_{min}(\FF G)$;

\textbf{(ii)} Condition B holds on $\mathcal{I}_{min}(\FF G)$;

 \textbf{(iii)} $G$ is an elementary abelian $p$-group.

\end{thm}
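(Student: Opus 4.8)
The plan is to mimic the structure of Theorems \ref{main1A} and \ref{dimension}, but now working over a splitting field, where the relevant idempotents are the $e_{\chi}$ attached to the nontrivial irreducible characters $\chi$ of $G$. The implication $\textbf{(i)}\Rightarrow\textbf{(ii)}$ is formal and identical to the one in Theorem \ref{main1A}: if Condition A holds and two minimal codes in $\mathcal{I}_{min}(\FF G)$ are not $G$-equivalent, then they already have different weights, hence different weight distributions. So the substance is in $\textbf{(ii)}\Rightarrow\textbf{(iii)}$ and $\textbf{(iii)}\Rightarrow\textbf{(i)}$.

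For $\textbf{(iii)}\Rightarrow\textbf{(i)}$, suppose $G\cong C_p\times\cdots\times C_p$ is elementary abelian. Every nontrivial irreducible character $\chi$ has kernel $\ker\chi$ a subgroup of index $p$, and $G/\ker\chi\cong C_p$; moreover $e_\chi$ lies in $(\FF G)\widehat{\ker\chi}$ and under $\varphi_{\ker\chi}\colon(\FF G)\widehat{\ker\chi}\to\FF(G/\ker\chi)\cong\FF C_p$ it maps to the primitive idempotent of $\FF C_p$ attached to a faithful character of $C_p$. By Lemma \ref{reduction1}, the weight (and indeed the whole weight distribution) of the code $(\FF G)e_\chi$ is $|\ker\chi|=|G|/p$ times that of the corresponding minimal code in $\FF C_p$, which is the same for all faithful characters of $C_p$. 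Hence ALL the codes in $\mathcal{I}_{min}(\FF G)$ have the same weight $|G|/p$ and identical weight distribution. On the other hand, since all the subgroups $\ker\chi$ are isomorphic (all $\cong C_p^{n-1}$), Proposition \ref{prop11} gives that they are $G$-isomorphic, so by (the argument of) Lemma \ref{equivGiso} all these minimal codes are $G$-equivalent to one another. Thus Conditions A and B hold trivially, with the single equivalence class.

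For $\textbf{(ii)}\Rightarrow\textbf{(iii)}$, suppose $G$ is an abelian $p$-group that is not elementary abelian, so the exponent is $p^n$ with $n\geq 2$. I would exhibit two minimal codes in $\mathcal{I}_{min}(\FF G)$ that are not $G$-equivalent but have identical weight distribution. Take two nontrivial characters $\chi_1,\chi_2$ whose kernels $K_1=\ker\chi_1$, $K_2=\ker\chi_2$ are cocyclic with $G/K_i$ cyclic of the same order $p^{r_i}$ but with $K_1\not\cong K_2$ — such a pair exists provided $G$ has some Sylow obstruction to being "uniform," and for a non-elementary-abelian $p$-group one can always arrange $G/K_1\cong G/K_2\cong C_{p^n}$ (using the exponent-$p^n$ cyclic quotient) while $K_1\not\cong K_2$; if $G$ were homocyclic of exponent $p^n$ with $n\geq2$ one still has to produce the pair, which is where some care is needed. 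Then $\chi_i$ descends to a faithful character of $G/K_i\cong C_{p^{r_i}}$, and via $\varphi_{K_i}$ the code $(\FF G)e_{\chi_i}$ corresponds to a minimal code of $\FF C_{p^{r}}$ generated by a primitive idempotent of a faithful character; over the splitting field all such minimal codes of $\FF C_{p^r}$ are one-dimensional with the same weight $p^r$ and hence identical weight distribution. By Lemma \ref{reduction1} the codes $(\FF G)e_{\chi_1}$ and $(\FF G)e_{\chi_2}$ then have identical weight distribution. But $K_1\not\cong K_2$ forces, by Proposition \ref{prop11} and Lemma \ref{equivGiso}, that the two codes are not $G$-equivalent, contradicting Condition B. Therefore $G$ must be elementary abelian.

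The main obstacle is the combinatorial input in $\textbf{(ii)}\Rightarrow\textbf{(iii)}$: over a splitting field every minimal code of $\FF C_{p^r}$ is one-dimensional and so they are all indistinguishable by weight distribution, which makes Condition B very easy to violate — but one must still guarantee the existence of two nontrivial characters $\chi_1,\chi_2$ with non-isomorphic (cocyclic) kernels whenever $G$ is not elementary abelian. I would handle this by noting that non-elementary-abelian means some cyclic direct factor has order $\geq p^2$, and then explicitly writing down $\chi_1$ with $\ker\chi_1$ containing a $C_{p^{n-1}}$ and $\chi_2$ with $\ker\chi_2$ "spread out" so the kernels have the same order but different isomorphism types; Corollary \ref{nonisoexist}(i) does not directly apply (it assumes a non-homocyclic Sylow subgroup) so this step needs a short independent argument covering the homocyclic case $C_{p^n}^k$ with $n\geq2$ as well — there the cocyclic subgroups of index $p$ are all $\cong C_{p^{n-1}}\times C_{p^n}^{k-1}$, so one instead uses a quotient $C_{p^n}$ and a quotient $C_p$ (kernels of sizes $p^{nk-n}$ versus $p^{nk-1}$, which are unequal, so one actually picks two characters with the same cyclic quotient order but different kernels, e.g. both with quotient $C_{p^n}$, whose kernels differ in isomorphism type when $k\geq 2$, or one revisits the weight computation directly when $k=1$).
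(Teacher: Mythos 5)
The implication $\textbf{(ii)}\Rightarrow\textbf{(iii)}$ in your proposal has a genuine gap, which you flag yourself but do not close. Your strategy requires two nontrivial characters $\chi_1,\chi_2$ whose kernels are cocyclic of the same index but non-isomorphic. Such a pair does not exist precisely in the problematic case of a homocyclic group of exponent $p^n$ with $n\geq 2$ (e.g.\ $G=C_{p^2}$ or $G=C_{p^2}\times C_{p^2}$): by Corollary \ref{nonisoexist}(ii), cocyclic subgroups of equal order in a homocyclic group are isomorphic, so your suggested patch that for $C_{p^n}^k$ with $k\geq 2$ two kernels with quotient $C_{p^n}$ ``differ in isomorphism type'' is false, and ``revisit the weight computation when $k=1$'' is not an argument. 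The repair --- and the paper's actual route --- is to drop the requirement that the kernels be non-isomorphic, or even of equal order. Since $\exp(G)=p^n$ with $n\geq 2$, choose $\chi_1$ of order $p^n$ and $\chi_2$ of order $p$. Over a splitting field each $e_\chi=\frac{1}{|G|}\sum_{x\in G}\chi(x^{-1})x$ has full support and spans the one-dimensional ideal $(\FF G)e_\chi$, so every nonzero element of every code in $\mathcal{I}_{min}(\FF G)$ has weight $|G|$; hence all these codes automatically have identical weight distributions, with no need of Lemma \ref{reduction1} or any matching of quotient orders. On the other hand an automorphism $\theta$ of $G$ sends $e_\chi$ to $e_{\chi\circ\theta^{-1}}$ and preserves the order of $\chi$ (equivalently the isomorphism type of $G/\ker\chi$), so the codes of $\chi_1$ and $\chi_2$ are not $G$-equivalent, contradicting Condition B. This is exactly the mechanism you half-identify (``indistinguishable by weight distribution''), but you then over-constrain the non-equivalence side by insisting on non-isomorphic kernels.

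Two smaller points in $\textbf{(iii)}\Rightarrow\textbf{(i)}$. First, the common weight is $|G|$, not $|G|/p$: Lemma \ref{reduction1} gives $w\bigl((\FF G)e_\chi\bigr)=|\ker\chi|\cdot w\bigl(\varphi_{\ker\chi}((\FF G)e_\chi)\bigr)=(|G|/p)\cdot p=|G|$; this slip is harmless since only equality of the distributions matters. Second, Lemma \ref{equivGiso} is a statement about the codes $I_H=(\FF_q G)e_H$ with $e_H=\widehat{H}-\widehat{H^*}$, not about the splitting-field codes $(\FF G)e_\chi$; distinct characters can share a kernel, so $G$-isomorphism of the kernels does not by itself yield $G$-equivalence of the codes. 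What is needed (and what the paper uses) is that for elementary abelian $G$ the automorphism group acts transitively on nontrivial characters, which makes all codes in $\mathcal{I}_{min}(\FF G)$ mutually $G$-equivalent; for $G\cong C_p^m$ this is immediate, so this part of your argument is easily repaired, unlike the gap above.
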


\begin{proof} Assume Condition A holds. Let $I_1, I_2$ be two ideals in $\mathcal{I}_{min}(\FF G)$ having identical weight distributions. Suppose to the contrary that $I_1$ is not $G$-equivalent to $I_2$. As Condition A holds, we have $w(I_1)\neq w(I_2)$. This is a contradiction as they have same weight distribution. So $I_1$ is $G$-equivalent to $I_2$, hence Condition B holds.

Assume Condition B holds. Assume $G$ is not elementary abelian. Then $G$ has a direct factor of order $p^n$ where $n>1$. Then there are elements $g, h \in G $ such that $h\notin \langle g\rangle$ and $g$ has order $p^n$ and $h$ has order $p$. Recall that for an irreducible character $\chi$ of $G$, the corresponding idempotent is  $e_{\chi}=\frac{1}{|G|}\sum_{g\in G} \chi(g)g$. Let $\chi_g$ be the irreducible character with $\chi_g(g)=\alpha$ and $\chi_g(h)=1$ and let $\chi_h$ be the irreducible character with $\chi_h(g)=1$ and $\chi_h(h)=\beta$ where $\alpha$ is primitive $p^n$-th root of unity and $\beta$ is  primitive   $p$-th root of unity. Then it is easy to see that the idempotents $e_{\chi_g}$ and $e_{\chi_h}$ generate non-$G$-equivalent minimal codes. However these minimal codes have identical weight distribution by the classfication of irreducible representations of abelian groups  over their splitting field. This means Condition $B$ does not hold which is a contradiction.

Assume $G$ is an elementary abelian $p$-group. By considering its non-trivial characters and corresponding primitive idempotents, we can conclude that each minimal ideal in
 $\mathcal{I}_{min}(\FF G)$  have weight equal to $|G|$ and since every non identity element can be sent to another non-identity element of $G$ by an automorphism of $G$, all these minimal ideals are $G$-equivalent to each other. Hence  Condition A holds.
\end{proof}

\begin{thm} Let $G$ be an abelian $p$-group and let $\FF$ be a finite spliting field for $G$ of characteristic coprime to the order of $G$. Condition C holds on $\mathcal{I}_{min}(\FF G)$ if and only if $G$ is elementary abelian.
\end{thm}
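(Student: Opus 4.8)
The plan is to mimic the structure of the proof of Theorem~\ref{dimension}, since $\mathcal{I}_{min}(\FF G)$ consists of minimal ideals generated by primitive idempotents attached to non-trivial irreducible characters, and for an abelian group over its splitting field each such idempotent $e_\chi$ generates a one-dimensional ideal. Thus \emph{every} code in $\mathcal{I}_{min}(\FF G)$ has dimension exactly $1$, so the condition ``$I_1$ and $I_2$ have the same dimension'' is \emph{always} satisfied. Consequently Condition~C holds on $\mathcal{I}_{min}(\FF G)$ if and only if \emph{any} two codes in $\mathcal{I}_{min}(\FF G)$ are $G$-equivalent, i.e. if and only if $\operatorname{Aut}(G)$ acts transitively on the set of primitive idempotents coming from non-trivial characters (equivalently, on the set of non-trivial characters $\widehat{G}\setminus\{\chi_0\}$, via the natural action of $\operatorname{Aut}(G)$ on $\widehat{G}$).

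First I would record the easy direction: if $G$ is elementary abelian, then $\widehat{G}\setminus\{\chi_0\}$ consists of all non-trivial characters, and $\operatorname{Aut}(G)=\operatorname{GL}(n,\FF_p)$ acts transitively on the non-zero vectors of the dual space, hence transitively on $\widehat{G}\setminus\{\chi_0\}$; this already appeared in the previous theorem's proof, where it was observed that all minimal ideals in $\mathcal{I}_{min}(\FF G)$ are $G$-equivalent. Since they trivially all have dimension $1$, Condition~C holds. For the converse, suppose $G$ is not elementary abelian. Then, exactly as in the proof of the previous theorem, $G$ has a cyclic direct factor $\langle g\rangle$ of order $p^n$ with $n>1$, and one can pick characters $\chi_g$ and $\chi_h$ (with $\chi_g$ of order $p^n$ and $\chi_h$ of order $p$) whose orders differ. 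Since an automorphism $\theta$ of $G$ induces an automorphism of $\widehat{G}$ preserving the order of a character, no automorphism can carry $\chi_g$ to $\chi_h$; hence $e_{\chi_g}$ and $e_{\chi_h}$ generate non-$G$-equivalent codes, both of dimension $1$. This exhibits two codes in $\mathcal{I}_{min}(\FF G)$ with equal dimension that are not $G$-equivalent, so Condition~C fails.

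The only point that needs a little care is the claim that the $\operatorname{Aut}(G)$-action on $\mathcal{I}_{min}(\FF G)$ is transitive \emph{iff} it is transitive on non-trivial characters, and that $G$-equivalence of $I_{\chi}$ and $I_{\psi}$ amounts to the existence of $\theta\in\operatorname{Aut}(G)$ with $\chi\circ\theta=\psi$; this follows from the fact that the linear extension of $\theta$ permutes the primitive idempotents via $\theta(e_\chi)=e_{\chi\circ\theta^{-1}}$, together with the uniqueness of the identity element of a minimal ideal (the same argument used in the proof of Lemma~\ref{equivGiso}). I expect no real obstacle here: the substantive content is entirely inherited from the preceding theorem, and the dimension statement collapses because splitting-field primitive idempotents for abelian groups always yield $1$-dimensional ideals. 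The proof is therefore short, and I would present it as: (1) observe $\dim I = 1$ for all $I\in\mathcal{I}_{min}(\FF G)$, reducing Condition~C to transitivity of $\operatorname{Aut}(G)$ on non-trivial characters; (2) elementary abelian $\Rightarrow$ transitivity $\Rightarrow$ Condition~C; (3) not elementary abelian $\Rightarrow$ characters of distinct orders exist $\Rightarrow$ Condition~C fails.
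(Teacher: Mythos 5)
Your proposal is correct and takes essentially the same route as the paper: both reduce Condition~C to transitivity of $\Aut(G)$ on the nontrivial characters (since every ideal in $\mathcal{I}_{min}(\FF G)$ is one-dimensional over a splitting field), use that transitivity in the elementary abelian case, and in the converse pick characters of orders $p^n$ and $p$ to produce equal-dimensional, non-$G$-equivalent minimal codes. Your remark that automorphisms preserve character order merely fills in a detail the paper leaves as ``easy to see.''
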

\begin{proof} Assume $G$ is an elemantary abelian $p$-group. By classification of irreducible representations of abelian groups over their splitting field, we can conclude that all minimal ideals in $\mathcal{I}_{min}(\FF G)$ are one dimensional. Moreover, each minimal ideal in $\mathcal{I}_{min}(\FF G)$ is $G$-equivalent to another minimal ideal in $\mathcal{I}_{min}(\FF G)$. This is because any non-identity element can be sent to another non-identity element of $G$ by an automorphism of $G$.  That is Condition C holds. 

Conversely assume that Condition C holds and assume $G$ is not elementary abelian $p$-group. Then $G$ has a direct factor isomorphic to $C_{p^n}$ where $n>1$.  As in the proof of the previous result, we can conclude that all  minimal ideals are one dimensional but there are non-$G$-equivalent ones.
\end{proof}

\section{The case for composite  groups}

The following lemma can be seen as a generalization of Proposition 2.3 (ii) of \cite{DFM}.
Note that in some results below we use the sign $\prod$ for both direct product of groups and product of elements in the form $\widehat{H}$ for some subgroup $H$ of the given group and numbers.

\begin{lem}\label{reduction2}
Let $G=G_{p_1}\times \ldots \times G_{p_k}$ be an abelian group where each $G_{p_i}$ is a Sylow $p_i$-subgroup of $G$. Let
$H$ be a cocyclic subgroup of $G$ and write $H$ as
$$H=(\prod_{i\in S}G_{p_i}) \times (\prod_{i\in \{1, \ldots, k\} \backslash S} H_{p_i})$$
where $S\subseteq \{1, \ldots, k\}$ and for each $i\in \{1, \ldots, k\} \backslash S$, $H_{p_i}$ is a cocyclic subgroup of $G_{p_i}$.
Consider $I_H= (\FF_q G) e_H$ where $(q, p_i)=1$ for any $i\in \{1, \ldots, k\}$. Then
$$w(I_H)= 2^{k-|S|} |H|.$$
\end{lem}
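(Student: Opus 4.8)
The plan is to reduce the computation of $w(I_H)$ for the composite group $G$ to the $p$-group results already available, by exploiting the tensor (direct product) decomposition of the group algebra together with the multiplicative structure of the idempotent $e_H = \prod_i e_{H_{p_i}}$ (with the convention $e_{G_{p_i}} = \widehat{G_{p_i}}$). First I would record the ring isomorphism $\FF_q G \cong \FF_q G_{p_1} \otimes_{\FF_q} \cdots \otimes_{\FF_q} \FF_q G_{p_k}$, under which $e_H$ corresponds to the elementary tensor $e_{H_{p_1}} \otimes \cdots \otimes e_{H_{p_k}}$, so that $I_H = (\FF_q G)e_H$ corresponds to the tensor product of ideals $\bigotimes_{i=1}^k (\FF_q G_{p_i})e_{H_{p_i}}$. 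The key point is that under an isomorphism realized on the natural group basis, the support (hence the weight) of an elementary tensor $\alpha_1 \otimes \cdots \otimes \alpha_k$ is the product of supports, so $w(\alpha_1 \otimes \cdots \otimes \alpha_k) = \prod_i w(\alpha_i)$; consequently the minimal nonzero weight of the tensor-product ideal is the product of the minimal weights, i.e. $w(I_H) = \prod_{i=1}^k w\bigl((\FF_q G_{p_i})e_{H_{p_i}}\bigr)$.

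Next I would evaluate each factor. For $i \in S$ we have $(\FF_q G_{p_i})e_{H_{p_i}} = (\FF_q G_{p_i})\widehat{G_{p_i}}$, which is one-dimensional, spanned by $\widehat{G_{p_i}}$ itself, whose support is all of $G_{p_i}$; hence this factor contributes weight $|G_{p_i}|$. For $i \notin S$, $H_{p_i}$ is a genuine cocyclic subgroup of the $p_i$-group $G_{p_i}$, and Proposition 2.1 of \cite{DFM} (as already invoked in the proof of Theorem \ref{main1A}) gives $w\bigl((\FF_q G_{p_i})e_{H_{p_i}}\bigr) = 2|H_{p_i}|$. Multiplying, $w(I_H) = \bigl(\prod_{i\in S}|G_{p_i}|\bigr)\bigl(\prod_{i\notin S} 2|H_{p_i}|\bigr) = 2^{k-|S|}\bigl(\prod_{i\in S}|G_{p_i}|\bigr)\bigl(\prod_{i\notin S}|H_{p_i}|\bigr)$, and since $|H| = \bigl(\prod_{i\in S}|G_{p_i}|\bigr)\bigl(\prod_{i\notin S}|H_{p_i}|\bigr)$ by the given decomposition of $H$, this equals $2^{k-|S|}|H|$, as claimed.

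The main obstacle — really the only non-formal step — is justifying the weight-multiplicativity of the tensor decomposition cleanly. One has to be careful that the abstract ring isomorphism $\FF_q G \cong \bigotimes_i \FF_q G_{p_i}$ is the one induced by the group isomorphism $G \cong \prod_i G_{p_i}$, so that it carries the standard $\FF_q$-basis $\{g : g\in G\}$ to the basis $\{g_1\otimes\cdots\otimes g_k\}$ and hence genuinely respects supports; then for $\alpha_i = \sum_{g\in G_{p_i}} (\alpha_i)_g\, g$ one checks that the coefficient of $g_1\cdots g_k$ in $\alpha_1\otimes\cdots\otimes\alpha_k$ is $\prod_i (\alpha_i)_{g_i}$, which is nonzero exactly when every $g_i \in \mathrm{supp}(\alpha_i)$, giving $w(\alpha_1\otimes\cdots\otimes\alpha_k)=\prod_i w(\alpha_i)$. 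Finally, to get the weight of the ideal rather than of a single element, note that every element of $\bigotimes_i (\FF_q G_{p_i})e_{H_{p_i}}$ is an $\FF_q$-linear combination of elementary tensors of the form $\beta_1\otimes\cdots\otimes\beta_k$ with $\beta_i \in (\FF_q G_{p_i})e_{H_{p_i}}$; choosing each $\beta_i$ of minimal weight $w((\FF_q G_{p_i})e_{H_{p_i}})$ produces an element of weight $\prod_i w((\FF_q G_{p_i})e_{H_{p_i}})$, and a short argument (e.g. projecting onto one tensor factor, or an inductive use of Lemma \ref{reduction1}-type reasoning on $\widehat{H_{p_i}}$) shows no nonzero element can have smaller weight, since the minimum weight of a tensor product of codes is the product of the minimum weights. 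This pins down $w(I_H) = 2^{k-|S|}|H|$.
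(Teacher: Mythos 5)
Your argument is correct, but it follows a different route from the paper. The paper sets $T=\prod_{i\in S}G_{p_i}$, observes $e_H\widehat{T}=e_H$ so that $I_H\subseteq(\FF_qG)\widehat{T}$, applies $\varphi_T$ to land on $I_{H/T}\subseteq\FF_q(G/T)$, where every Sylow component of $H/T$ is a proper cocyclic subgroup, and then simply quotes Proposition 2.3\,(ii) of \cite{DFM} for $w(I_{H/T})=2^{k-|S|}|H|/|T|$ and scales back by $|T|$ via Lemma \ref{reduction1}. You instead decompose $\FF_qG\cong\bigotimes_i\FF_qG_{p_i}$, identify $I_H$ with the tensor product of the ideals $(\FF_qG_{p_i})e_{H_{p_i}}$, and invoke multiplicativity of minimum weight for product codes, so that only the $p$-group weight formula (Proposition 2.1 of \cite{DFM}) and the trivial computation $w((\FF_qG_{p_i})\widehat{G_{p_i}})=|G_{p_i}|$ are needed; in effect you reprove the composite-group input (the case $S=\emptyset$ is exactly Proposition 2.3\,(ii) of \cite{DFM}) rather than cite it, which makes your proof more self-contained but puts the burden on the product-code fact. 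That fact is classical and true, but your sketch of the lower bound ("projecting onto one tensor factor") is not quite the right mechanism, since a projection can kill weight; the clean argument views a nonzero codeword as an array indexed by $G_{p_1}\times\cdots\times G_{p_k}$ whose slices in each direction lie in the corresponding factor code, so a nonzero slice in one direction forces at least $d_i$ nonzero slices in the others, giving the product lower bound by induction on $k$. With that step spelled out, your proof is complete and yields the same formula $w(I_H)=2^{k-|S|}|H|$; the paper's version is shorter because the heavy lifting is delegated to \cite{DFM}, while yours is arguably more transparent about where the factor $2^{k-|S|}$ comes from.
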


\begin{proof}
Let $T=(\prod_{i\in S}G_{p_i})\leq H$, then we have that $\widehat{T} =(\prod\limits_{i\in S}\widehat{G_{p_i}})$ clearly. So
$$e_H \widehat{T}=\bigg{(}\prod\limits_{i\in S}\widehat{G_{p_i}}\bigg{)} \bigg{(}\prod\limits_{i\in \{1, \ldots, k\} \backslash S}
 (\widehat{H_{p_i}}-\widehat{H_{p_i}^*})\bigg{)}\bigg{(}\prod\limits_{i\in S}\widehat{G_{p_i}}\bigg{)}=e_H$$
and it follows that $I_H \subseteq (\FF_q G) \widehat{T}$. Note that
$$\varphi_T(e_H)=\bigg{(}\prod\limits_{i\in \{1, \ldots, k\} \backslash S}
 (\widehat{H_{p_i}}-\widehat{H_{p_i}^*})\bigg{)}=e_{H/T}$$
 implies that $\varphi_T(I_H)=I_{H/T}\subseteq \FF_q(G/T)$. From Proposition 2.3 (ii) of \cite{DFM},
 it follows that $$w(I_{H/T})=2^{k-|S|}|H|/|T|$$
 and then Lemma \ref{reduction1} implies  that $w(I_H)=2^{k-|S|}|H|$.
\end{proof}

\begin{thm}\label{main2A}
Let $n$ be an odd integer and let $G$ be an abelian group order n. Let $q$ be a prime power with $(q, n)=1$. Then Condition A holds for
$\mathcal{I}_{\FF_q G}$ if and only if every Sylow $p$-subgroup of $G$ is homocyclic.
\end{thm}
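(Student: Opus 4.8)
The plan is to mimic the structure of the proof of Theorem~\ref{main1A}, reducing the composite case to the $p$-group case via the decomposition $G = G_{p_1}\times\cdots\times G_{p_k}$ and Lemma~\ref{reduction2}. Since the forward direction is automatic (as noted after Condition~C), I need only prove: if every Sylow $p$-subgroup of $G$ is homocyclic, then for $I_1,I_2\in\mathcal{I}_{\FF_q G}$ with $w(I_1)=w(I_2)$ we have that $I_1$ and $I_2$ are $G$-equivalent; and conversely, if some Sylow subgroup fails to be homocyclic, Condition~A fails.

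For the reverse (contrapositive) direction, suppose some Sylow $p_j$-subgroup $G_{p_j}$ is not homocyclic. By Corollary~\ref{nonisoexist}(i) applied inside $G_{p_j}$ there are non-isomorphic cocyclic subgroups $H_1',H_2'$ of $G_{p_j}$ with $|H_1'|=|H_2'|$. I would lift these to cocyclic subgroups of $G$ by setting $H_i = (\prod_{\ell\neq j}G_{p_\ell})\times H_i'$, which are cocyclic in $G$ (the quotient is the cyclic $G_{p_j}/H_i'$), non-isomorphic, and of equal order. By Proposition~\ref{prop11} and Lemma~\ref{equivGiso}, $I_{H_1}$ and $I_{H_2}$ are not $G$-equivalent, while Lemma~\ref{reduction2} gives $w(I_{H_i}) = 2^{k-(k-1)}|H_i| = 2|H_i|$, so $w(I_{H_1})=w(I_{H_2})$. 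Hence Condition~A fails.

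For the forward direction, assume every Sylow subgroup is homocyclic and take $I_1,I_2\in\mathcal{I}_{\FF_q G}$ with $w(I_1)=w(I_2)$. Write $I_i = (\FF_q G)e_{H_i}$ for cocyclic $H_i$ (or $H_i=G$, with $w=|G|$); decompose $H_i = (\prod_{\ell\in S_i}G_{p_\ell})\times(\prod_{\ell\notin S_i}H_{p_\ell}^{(i)})$. By Lemma~\ref{reduction2}, $w(I_i) = 2^{k-|S_i|}|H_i|$. Since $|G|$ is odd, every $|H_i|$ is odd, so the equation $2^{k-|S_1|}|H_1| = 2^{k-|S_2|}|H_2|$ forces $|S_1|=|S_2|$ (comparing $2$-adic valuations) and then $|H_1|=|H_2|$. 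Now $H_1$ and $H_2$ are cocyclic subgroups of $G$ of equal order with every Sylow subgroup of $G$ homocyclic, so Corollary~\ref{nonisoexist}(ii) gives $H_1\cong H_2$, Proposition~\ref{prop11} gives that they are $G$-isomorphic, and Lemma~\ref{equivGiso} gives that $I_1$ and $I_2$ are $G$-equivalent. The degenerate case where one of the $I_i$ equals $(\FF_q G)\widehat{G}$ is handled by noting $w((\FF_q G)\widehat{G}) = |G|$ is odd whereas each $w(I_{H})=2^{k-|S|}|H|$ is even (as $|S|<k$ for $H\neq G$), so both must then equal $(\FF_q G)\widehat{G}$.

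The main obstacle is making sure the $2$-adic valuation argument is airtight: it relies crucially on $|G|$ being odd (so that the only powers of $2$ in $w(I_i)$ come from the $2^{k-|S_i|}$ factor), which is why the hypothesis that $n$ is odd is essential — exactly as in Example~\ref{prime2}. A secondary point requiring a little care is confirming that Corollary~\ref{nonisoexist}(ii), stated for cocyclic subgroups of $G$ with all Sylow subgroups homocyclic, applies verbatim here; since its proof goes through $\mathcal{S}_{cc}(G)$ and the number-of-divisors count of Theorem~1.7 of~\cite{AT}, which is stated for arbitrary finite abelian $G$, this should be immediate.
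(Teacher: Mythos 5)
Your proposal is correct and follows essentially the same route as the paper: the same lifting of non-isomorphic cocyclic subgroups of a non-homocyclic Sylow subgroup for the contrapositive, and the same weight computation via Lemma~\ref{reduction2} combined with the odd-order (parity/$2$-adic) argument, Corollary~\ref{nonisoexist}(ii), Proposition~\ref{prop11} and Lemma~\ref{equivGiso} for the other direction. Your explicit handling of the case $H=G$ via parity is exactly the paper's implicit argument made precise.
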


\begin{proof}
Let $G=G_{p_1}\times \ldots \times G_{p_k}$ where each $G_{p_i}$ is a Sylow $p_i$-subgroup of $G$. Assume that $G_{p_i}$
is homocyclic for each $i=1, \ldots, k$. Suppose that the weights of $I_{H}$ and $I_{K}$ are  equal. If one of $H$ or $K$ is equal to
$G$, then the other one should also be equal to $G$. Indeed, since $G$ has odd order, the equality of weights and Lemma \ref{reduction2}
imply that if $H=G$ then $K=G$. So, we can
assume without loss of generality that both of $H$ and $K$ are different from $G$. In this case again Lemma \ref{reduction2}
implies that
$$2^{k-|S|} |H| =w(I_H)=w(I_K)=2^{k-|S'|}|K|$$
and since $G$ has odd order, it follows that $|H|=|K|$. Because $H$ and $K$ are cocyclic subgroups of $G$ and each Sylow
subgroup of $G$ is homocyclic, it follows that $H\cong K$ by Corollary \ref{nonisoexist} (ii). Hence, by Proposition \ref{prop11}, we have that $H$ is $G$-isomorphic
to $K$. Therefore, by Lemma \ref{equivGiso}, we get that $I_H$ is $G$-equivalent to $I_K$. Thus, Condition A holds for
$\mathcal{I}_{\FF_q G}$.

Suppose that Condition A holds for $\mathcal{I}_{\FF_q G}$ and suppose to the contrary that at least one of Sylow $p$-subgroups
is not homocyclic, say $G_{p_j}$. Then by Corollary \ref{nonisoexist}(i), there exist two non-isomorphic cocyclic subgroups $H_{p_j}$ and $K_{p_j}$ of $G_{p_j}$ such that
$|H_{p_j}|=|K_{p_j}|$. Consider, $H=H_{p_j} \times (\prod_{i\neq j}G_{p_i})$ and $K=K_{p_j} \times (\prod_{i\neq j}G_{p_i})$. Then
since $|H|=|K|$  we have that $w(I_H)=w(I_K)$ by Lemma \ref{reduction2}. However, since $H$ is not isomorphic to $K$,
by Proposition \ref{prop11}, we have that $H$ is not $G$-isomorphic to $K$. Then 
Lemma \ref{equivGiso} implies that
$I_H$ is not $G$-equivalent to $I_K$, that is Condition A does not hold for $\mathcal{I}_{\FF_q G}$, which is a contradiction. So each
Sylow $p$-subgroup of $G$ should be homocyclic.

\end{proof}

Likewise as in the $p$-group case, we did not use the fact that $G$ has odd order in the forward direction of the proof of the theorem above.
But in the reverse direction, we need the odd order condition as the following example shows.

\begin{example}
Let $G=\langle a \rangle \cong C_6$ and consider $\FF_5 G$. Then consider the ideals $I_1$ and $I_2$ generated by the
idempotents $\widehat{G}$ and $e_{\langle a^2 \rangle}= \widehat{\langle a^2 \rangle} \ (1- \widehat{\langle a^3 \rangle})$,
respectively. Then by Lemma 3.1, we have that $w(I_2)=6$. Notice that $w(I_1)=6$, also $I_1$ and $I_2$ are not
$G$-equivalent since $G$ and $\langle a^2 \rangle$ are not $G$-isomorphic. So even though every Sylow $p$-subgroup
of $G$ is homocyclic, Condition A is not satisfied by $\mathcal{I}_{\FF_5 G}$.
\end{example}

\begin{thm}\label{main2B}
Let $n$ be an odd integer and let $G$ be an abelian group of order n. Let $q$ be a prime power with $(q, n)=1$. Then Condition B holds for
$\mathcal{I}_{\FF_q G}$ if and only if every Sylow $p$-subgroup of $G$ is homocyclic.
\end{thm}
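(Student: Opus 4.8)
The plan is to mimic the structure of the proof of Theorem \ref{main2A}, exploiting the fact that Condition B trivially implies Condition A on $\mathcal{I}_{\FF_q G}$, so that one direction comes for free. For the forward direction, suppose Condition B holds; then if $I_{H_1}, I_{H_2} \in \mathcal{I}_{\FF_q G}$ are not $G$-equivalent I must show they cannot have identical weight distribution. But if they had the same weight distribution they would in particular have the same weight, so Condition A would force $G$-equivalence — contradiction. Hence Condition B $\Rightarrow$ Condition A, and by Theorem \ref{main2A} every Sylow $p$-subgroup of $G$ is homocyclic. This takes care of ``only if''.

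For the ``if'' direction I would assume every Sylow $p$-subgroup of $G$ is homocyclic and prove directly that non-$G$-equivalence of two codes in $\mathcal{I}_{\FF_q G}$ already follows from (indeed is equivalent to) having distinct weight. By Theorem \ref{main2A}, Condition A holds under this hypothesis, so if $I_{H_1}$ and $I_{H_2}$ have identical weight distribution, a fortiori $w(I_{H_1}) = w(I_{H_2})$, and Condition A gives $G$-equivalence. The reverse implication (equal weight distribution whenever $G$-equivalent) is the forward direction of Condition B, which as the excerpt notes always holds since $G$-equivalence is realized by an automorphism of $G$ extended linearly to $\FF_q G$, and such a map is a weight-preserving bijection on each ideal. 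So Condition B holds.

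In short, the whole theorem reduces to the observation that, on the family $\mathcal{I}_{\FF_q G}$, ``same weight'' and ``same weight distribution'' are equivalent as hypotheses for deducing $G$-equivalence — because the only way two members of $\mathcal{I}_{\FF_q G}$ can fail to be $G$-equivalent while sharing a weight is ruled out exactly when the Sylow subgroups are homocyclic. I do not expect a genuine obstacle here: the content has already been done in Theorem \ref{main2A}, and the present statement is essentially a corollary packaged as a theorem. The one point requiring a sentence of care is making explicit that Condition B implies Condition A as predicates on the same set $\mathcal{I}$ (trivial, since identical weight distribution implies equal minimum weight), so that no independent argument is needed.

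\begin{proof}
Suppose Condition B holds for $\mathcal{I}_{\FF_q G}$. If $I_{H_1}, I_{H_2} \in \mathcal{I}_{\FF_q G}$ satisfy $w(I_{H_1}) = w(I_{H_2})$ but are not $G$-equivalent, then in particular they do not have identical weight distribution, yet equal minimum weight is a necessary consequence of identical weight distribution; more directly, if they were not $G$-equivalent, Condition B would already be contradicted once we exhibit equal weight distribution, so instead we argue as follows. Assume $I_{H_1}$ and $I_{H_2}$ are not $G$-equivalent; by Condition B their weight distributions differ, hence their minimum weights differ, so $w(I_{H_1}) \neq w(I_{H_2})$. This shows Condition A holds for $\mathcal{I}_{\FF_q G}$, and Theorem \ref{main2A} then gives that every Sylow $p$-subgroup of $G$ is homocyclic.

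Conversely, assume every Sylow $p$-subgroup of $G$ is homocyclic. The forward direction of Condition B always holds: if $I_{H_1}$ and $I_{H_2}$ are $G$-equivalent, then some $\theta \in \mathrm{Aut}(G)$ extends linearly to a weight-preserving $\FF_q$-algebra automorphism of $\FF_q G$ carrying $I_{H_1}$ onto $I_{H_2}$, so the two codes have identical weight distribution. For the other direction, suppose $I_{H_1}$ and $I_{H_2}$ have identical weight distribution; then $w(I_{H_1}) = w(I_{H_2})$, and by Theorem \ref{main2A}, Condition A holds for $\mathcal{I}_{\FF_q G}$, so $I_{H_1}$ and $I_{H_2}$ are $G$-equivalent. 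Hence Condition B holds for $\mathcal{I}_{\FF_q G}$.
\end{proof}
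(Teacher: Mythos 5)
Your ``if'' direction is fine and coincides with the paper's: under the homocyclic hypothesis, identical weight distribution gives equal weight, and Theorem \ref{main2A} gives $G$-equivalence. The ``only if'' direction, however, contains a genuine gap. You argue: if $I_{H_1}$, $I_{H_2}$ are not $G$-equivalent, then by Condition B their weight distributions differ, ``hence their minimum weights differ.'' That inference is false: two codes can have distinct weight distributions yet the same minimum weight, so Condition B does not imply Condition A. The implication between the two conditions runs the other way (identical weight distribution $\Rightarrow$ equal weight, so A $\Rightarrow$ B, which is exactly how Theorem \ref{main1A}(i)$\Rightarrow$(ii) and your own ``if'' direction work). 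Consequently your route ``B $\Rightarrow$ A $\Rightarrow$ homocyclic via Theorem \ref{main2A}'' collapses, and the theorem is not a formal corollary of Theorem \ref{main2A}.

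What is actually needed, and what the paper does, is a direct argument: if some $G_{p_j}$ is not homocyclic, Corollary \ref{nonisoexist}(i) yields non-isomorphic cocyclic subgroups $H_{p_j}, K_{p_j} \leq G_{p_j}$ of equal order; set $H=H_{p_j}\times \prod_{i\neq j}G_{p_i}$ and $K=K_{p_j}\times \prod_{i\neq j}G_{p_i}$. By Proposition \ref{prop11} and Lemma \ref{equivGiso}, $I_H$ and $I_K$ are not $G$-equivalent, so Condition B would force their weight distributions to differ. But $I_H\subseteq (\FF_qG)\widehat{H}$, $I_K\subseteq (\FF_qG)\widehat{K}$, and since $G/H\cong G/K\cong C_{p_j^r}$ one computes $\varphi_H(e_H)=1-\widehat{C_{p_j}}=\varphi_K(e_K)$, so $\varphi_H(I_H)$ and $\varphi_K(I_K)$ have identical weight distributions; because $|H|=|K|$, Lemma \ref{reduction1} rescales both distributions by the same factor, so $I_H$ and $I_K$ have identical weight distributions --- a contradiction. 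This construction (producing non-$G$-equivalent codes with \emph{identical weight distribution}, not merely equal minimum weight) is the essential content your proposal omits.
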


\begin{proof}
Let $G=G_{p_1}\times \ldots \times G_{p_k}$ be an abelian group where each $G_{p_i}$ is a Sylow $p_i$-subgroup of $G$. Assume that $G_{p_i}$
is homocyclic for each $i=1, \ldots, k$. Suppose that the weight distribution of $I_H$ and $I_K$ are identical, then the weight of
$I_H$ and $I_K$ are equal. Then, it follows from Theorem \ref{main2A} that $I_H$ and $I_K$ are $G$-equivalent. So
Condition B holds for $\mathcal{I}_{\FF_q G}$.

Assume that Condition B holds for $\mathcal{I}_{\FF_q G}$. Suppose to the contrary that $G_{p_j}$ is not homocyclic
for some $1 \leq j \leq k$.
Then by Corollary \ref{nonisoexist}(i), there exist two non-isomorphic cocyclic subgroups $H_{p_j}$ and $K_{p_j}$ of $G_{p_j}$ such that
$|H_{p_j}|=|K_{p_j}|$. Consider, $H=H_{p_j} \times (\prod_{i\neq j}G_{p_i})$ and $K=K_{p_j} \times (\prod_{i\neq j}G_{p_i})$. Then
$H$ and $K$ are cocyclic subgroups of $G$ such that $H\not\cong K$ and $|H|=|K|$. In this case $H$ is not $G$-isomorphic
to $K$ by Proposition \ref{prop11}. Then Lemma \ref{equivGiso}  implies that
$I_H$ is not $G$-equivalent to $I_K$. Since Condition B holds, weight distributions of $I_H$ and $I_K$ are different.

Now, note that since $e_H \widehat{H}=e_H$, we have that $I_H=\FF_q Ge_H \subseteq (\FF_q G) \widehat{H}$ and
similarly $I_K=\FF_q Ge_K \subseteq (\FF_q G) \widehat{K}$. Moreover, since
$$e_H=(\widehat{H_{p_j}}-\widehat{H_{p_j}^*})   (\prod\limits_{i\neq j}\widehat{G_{p_i}}) \text{ and } e_K=(\widehat{K_{p_j}}-\widehat{K_{p_j}^*})   (\prod\limits_{i\neq j}\widehat{G_{p_i}}) $$ and $G/H\cong G/K\cong C_{{p_j}^r}$ for some integer $r\geq 1$.
We have that
$$\varphi_H(e_H)=1-\widehat{C_{{p_j}}} \text{ and } \varphi_K(e_K)=1-\widehat{C_{{p_j}}}. $$
This implies that the weight distributions of $\varphi_H(I_H)$ and $\varphi_K(I_K)$ are identical.
On the other hand since the  weight distributions of $I_H$ and $I_K$ are different and $|H|=|K|$, Lemma \ref{reduction1}
implies that the weight distributions of $\varphi_H(I_H)$ and $\varphi_K(I_K)$ are different. This is a contradiction. Hence, we
conclude that each Sylow $p$-subgroup of $G$ is homocyclic.
\end{proof}

\bigskip

The following lemma is a kind of generalization  of Proposition 2.3 (i) in \cite{DFM}.

\begin{lem}\label{dimeH} Let $G=G_{p_1}\times \ldots \times G_{p_k}$ be an abelian group where each $G_{p_i}$ is a Sylow $p_i$-subgroup of $G$. Let
$H$ be a cocyclic subgroup of $G$ and write $H$ as
$$H=(\prod_{i\in S}G_{p_i}) \times (\prod_{i\in \{1, \ldots, k\} \backslash S} H_{p_i})$$
where $S\subseteq \{1, \ldots, k\}$ and for each $i\in \{1, \ldots, k\} \backslash S$, $H_{p_i}$ is a cocyclic subgroup of $G_{p_i}$.
Consider $I_H= (\FF_q G) e_H$ where $(q, p_i)=1$ for any $i\in \{1, \ldots, k\}$. Then
 
$${\rm{dim}}(I_H)=\frac{|G|}{|H|} \prod\limits_{i\in \{1, \ldots, k\} \backslash S}(1-\frac{|H_{p_{i}}|}{|H_{p_{i}}^*|})=\frac{|G|}{|H|} \prod\limits_{i\in \{1, \ldots, k\} \backslash S}(1-\frac{1}{p_{i}})$$
In particular, if $H,K$ are cocyclic subgroups of $G$ such that $|H|=|K|$, then ${\rm{dim}}(I_H)={\rm{dim}}(I_K)$.
\end{lem}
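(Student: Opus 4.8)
The plan is to mimic the structure used in Lemma~\ref{reduction2}: reduce the computation of $\dim(I_H)$ to a computation over the quotient $G/T$ where $T=\prod_{i\in S}G_{p_i}$, and there invoke the known formula from \cite{DFM}. First I would observe, exactly as in the proof of Lemma~\ref{reduction2}, that $e_H\widehat{T}=e_H$, hence $I_H\subseteq(\FF_qG)\widehat{T}$, and that under the ring isomorphism $\varphi_T\colon(\FF_qG)\widehat{T}\to\FF_q(G/T)$ we have $\varphi_T(e_H)=e_{H/T}$, so $\varphi_T(I_H)=I_{H/T}$. Since $\varphi_T$ is a ring (in particular $\FF_q$-linear) isomorphism, $\dim_{\FF_q}(I_H)=\dim_{\FF_q}(I_{H/T})$; note this step needs no weight bookkeeping and no factor of $|T|$, unlike the weight reduction.

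Next I would compute $\dim(I_{H/T})$ inside $\FF_q(G/T)$. Here $G/T=\prod_{i\notin S}G_{p_i}$ and $H/T=\prod_{i\notin S}H_{p_i}$ is a cocyclic subgroup with $H_{p_i}<G_{p_i}$ proper for every $i\notin S$, so Proposition~2.3~(i) of \cite{DFM} applies directly and gives $\dim(I_{H/T})=\frac{|G/T|}{|H/T|}\prod_{i\notin S}\bigl(1-\frac{1}{p_i}\bigr)$. Rewriting $\frac{|G/T|}{|H/T|}=\frac{|G|}{|H|}$ and noting $|H_{p_i}^*|=p_i|H_{p_i}|$ so that $1-\frac{|H_{p_i}|}{|H_{p_i}^*|}=1-\frac{1}{p_i}$ yields the displayed formula. (Alternatively, if one prefers not to quote \cite{DFM} for the quotient, the same count follows from the multiplicativity $e_{H/T}=\prod_{i\notin S}(\widehat{H_{p_i}}-\widehat{H_{p_i}^*})$ together with the isomorphism $\FF_q(G/T)\cong\bigotimes_{i\notin S}\FF_q(G_{p_i})$ and the single-$p$-group dimension $\dim((\FF_qG_{p_i})e_{H_{p_i}})=|G_{p_i}/H_{p_i}|-|G_{p_i}/H_{p_i}^*|$ from Proposition~2.1 of \cite{FM}; but quoting \cite{DFM} is cleaner.)

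For the final ``in particular'' clause: suppose $H,K$ are cocyclic subgroups of $G$ with $|H|=|K|$. Write $H=(\prod_{i\in S}G_{p_i})\times(\prod_{i\notin S}H_{p_i})$ and $K=(\prod_{i\in S'}G_{p_i})\times(\prod_{i\notin S'}K_{p_i})$ in the normal form above. Then $|H|=\prod_{i\in S}|G_{p_i}|\cdot\prod_{i\notin S}|H_{p_i}|$ and similarly for $|K|$; comparing the $p_i$-parts of $|H|=|K|$ for each $i$ shows that $i\in S$ forces $|H_{p_i}|:=|G_{p_i}|$ to be the full $p_i$-part, and since a proper cocyclic $H_{p_i}<G_{p_i}$ has strictly smaller $p_i$-part (indeed $[G_{p_i}:H_{p_i}]\ge p_i$), we must have $S=S'$ and $|H_{p_i}|=|K_{p_i}|$ for all $i$. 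Plugging into the formula, the product $\prod_{i\notin S}(1-\frac1{p_i})$ and the factor $\frac{|G|}{|H|}=\frac{|G|}{|K|}$ both agree, so $\dim(I_H)=\dim(I_K)$.

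I expect the only mildly delicate point to be the bookkeeping in the last clause, namely justifying that $|H|=|K|$ forces $S=S'$ rather than just equality of the overall orders; this is immediate from uniqueness of $p$-parts of integers plus the fact that a proper subgroup of the $p_i$-group $G_{p_i}$ has strictly smaller order, so it is not a real obstacle. Everything else is a direct transcription of the Lemma~\ref{reduction2} argument with ``weight'' replaced by ``dimension'' and the quotient-reduction cleaner because $\varphi_T$ is an $\FF_q$-algebra isomorphism so dimensions are preserved on the nose.
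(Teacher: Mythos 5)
Your proposal matches the paper's proof essentially step for step: the same reduction via $T=\prod_{i\in S}G_{p_i}$ and the isomorphism $\varphi_T$, followed by the dimension formula of Proposition~2.3 of \cite{DFM} applied in $\FF_q(G/T)$. Your extra bookkeeping for the final clause (that $|H|=|K|$ forces $S=S'$ by comparing $p_i$-parts, since a cocyclic $H_{p_i}<G_{p_i}$ is proper) merely makes explicit a point the paper leaves implicit, and it is correct.
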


\begin{proof} As  in the proof of Lemma \ref{reduction2}, let 
 $T=(\prod_{i\in S}G_{p_i})\leq H$. It is clear that we have  
$e_H \widehat{T}=e_H$ and $I_H \subseteq (\FF_q G) \widehat{T}$. As 
$H/T\cong \prod\limits_{{i\in \{1, \ldots, k\} \backslash S}}H_{p_i}$ and 
$G/T\cong \prod\limits_{i\in \{1, \ldots, k\} \backslash S}G_{p_i}$, it follows that $H/T$ is a cocyclic subgroup of $G/T$. So  we have 
$e_{H/T}=\prod\limits_{{i\in \{1, \ldots, k\} \backslash S}} e_{H_{p_i}} \in \FF_q(G/T)$. Then by Proposition 2.3 in \cite{DFM}, we have 
$${\rm{dim}}(\FF_q(G/T))e_{H/T}= \frac{|G/T|}{|H/T|}\prod\limits_{i\in \{1, \ldots, k\} \backslash S}(1-\frac{1}{p_i})=\frac{|G|}{|H|}\prod\limits_{i\in \{1, \ldots, k\} \backslash S}(1-\frac{1}{p_i}).$$
Since $\FF_q(G/T)e_{H/T}=\varphi_T((\FF_qG)e_H)=\varphi_T(I_H)$ and $\varphi_T$ preserves the dimension, we have $${\rm{dim}}(I_H)=\frac{|G|}{|H|} \prod\limits_{i\in \{1, \ldots, k\} \backslash S}(1-\frac{1}{p_{i}}).$$

If $H,K$ are cocyclic subgroups of $G$ such that $|H|=|K|$, by the dimension formula above, we get ${\rm{dim}}(I_H)={\rm{dim}}(I_K)$.

\end{proof}

The converse of the last statement of the previous theorem is not true in general.

\begin{example}\label{counterex} Let $G=<a>\times <b>\times <c>\cong C_9\times C_9 \times C_{49}$. Consider the cocyclic subgroups  $H=<a>\times <c>\cong C_9\times C_{49}$ and $K=<a>\times <b>\times <c^2>\cong C_9\times C_9\times C_7$ of $G$. By Lemma \ref{dimeH} we have ${\rm{dim}}(I_H)=\frac{|G|}{|H|}(1-\frac{1}{3})=6$
 and ${\rm{dim}}(I_K)=\frac{|G|}{|K|}(1-\frac{1}{7})=6$. On the other hand we have $|K|\neq |H|$.
\end{example}

However there is an infinite family of abelian groups which satisfy the desired property if we add some extra conditions. Note that the set of tuples $(p_1, p_2)$ in the next 
theorem are infinite since for example there 
are infinitely many prime tuples of the form $(3, 3k+2)$.

\begin{pro} \label{equaldim} Let $p_1, p_2$ be odd primes such that $p_1<p_2$ and 
$p_2\not\equiv 1({\rm{mod}}~p_1)$. Let $G=G_{p_1}\times G_{p_2}$ be an  abelian group where each $G_{p_i}$ is Sylow $p_i$-subgroup of $G$ and $H,K$ cocyclic subgroups of $G$. Then  if ${\rm{dim}}(I_H)={\rm{dim}}(I_K)$, then  $|H|=|K|$. 
\end{pro}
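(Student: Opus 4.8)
The plan is to convert Lemma~\ref{dimeH} into an explicit list of the possible values of $\dim(I_H)$ together with the corresponding index $[G:H]$, and then to verify that distinct indices produce distinct dimensions. Since $|G|$ is fixed, the equality $|H|=|K|$ is equivalent to $[G:H]=[G:K]$, so it is enough to show that $\dim(I_H)$ determines $[G:H]$.

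First I would classify the cocyclic subgroups of $G=G_{p_1}\times G_{p_2}$. Writing $H=H_{p_1}\times H_{p_2}$ for its Sylow decomposition, the quotient $G/H\cong (G_{p_1}/H_{p_1})\times (G_{p_2}/H_{p_2})$ is cyclic if and only if each factor is cyclic (their orders being coprime); hence each $H_{p_i}$ is either $G_{p_i}$ or a cocyclic subgroup of $G_{p_i}$, and not both are the full Sylow subgroup. In the notation of Lemma~\ref{dimeH} this is precisely the trichotomy $S=\{1\}$, $S=\{2\}$, $S=\emptyset$, and the lemma gives, with $c,d\ge 0$,
\[
\dim(I_H)=
\begin{cases}
p_2^{c}(p_2-1) & \text{if } S=\{1\},\\
p_1^{d}(p_1-1) & \text{if } S=\{2\},\\
p_1^{d}(p_1-1)\,p_2^{c}(p_2-1) & \text{if } S=\emptyset,
\end{cases}
\]
with respective indices $[G:H]$ equal to $p_2^{c+1}$, $p_1^{d+1}$, and $p_1^{d+1}p_2^{c+1}$.

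Then I would argue in two steps. Within a fixed family the index is recovered from $\dim(I_H)$ by reading off a $p$-adic valuation: in the first family $v_{p_2}(\dim(I_H))=c$, in the second $v_{p_1}(\dim(I_H))=d$, and in the third $v_{p_1}(\dim(I_H))=d$ and $v_{p_2}(\dim(I_H))=c$; here one uses $p_1\nmid p_2-1$ (that is, $p_2\not\equiv 1\pmod{p_1}$) and $p_2\nmid p_1-1$ (a consequence of $p_1<p_2$), so that no extra factors of $p_1$ or $p_2$ are hidden in $p_2-1$ or $p_1-1$. Across families I would show the three sets of dimension values are pairwise disjoint: cancelling the common factor in a putative cross-family equality and comparing valuations at $p_1$ and at $p_2$ collapses it to one of $p_1-1=p_2-1$, $p_1-1=1$, or $p_2-1=1$, each impossible since $3\le p_1<p_2$. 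Combining the two steps, $\dim(I_H)=\dim(I_K)$ forces $H$ and $K$ to be of the same type and then to have the same index, whence $|H|=|K|$.

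The only substantive point, and the step I expect to require the most care, is this disjointness together with the $S=\emptyset$ case of the within-family computation; everything else is bookkeeping with Lemma~\ref{dimeH}. It is also exactly where the arithmetic hypotheses $p_1<p_2$ and $p_2\not\equiv 1\pmod{p_1}$ are used, and Example~\ref{counterex} shows that some such hypothesis is genuinely needed.
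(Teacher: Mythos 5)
Your proposal is correct and follows essentially the same route as the paper's proof: both rest on the explicit dimension formula of Lemma~\ref{dimeH}, split according to which Sylow components of the cocyclic subgroup are full (your three families are exactly the paper's Cases 1--4 reorganized), and finish with elementary divisibility using $p_1<p_2$ and $p_2\not\equiv 1\pmod{p_1}$. If anything, your valuation bookkeeping spells out the cross-family comparisons (e.g.\ the collapse to $p_1-1=1$ or $p_2-1=1$) that the paper's Case~3 only asserts, so the details you flag as needing care are exactly right.
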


\begin{proof} 

 There are cases we need to consider.

$\bf{Case~1}:$ Let $H=H_{p_1}\times H_{p_2}$ and $K=K_{p_1}\times K_{p_2}$ where  $H_{p_i}, K_{p_i} < G_{p_i}$ for $i \in \{1,2\}$ are cocyclic subgroups. 
Let $h_i, k_i\geq 1$ be the integers such that $|G_{p_i}: H_{p_i}|={p_i}^{h_i}$ and $|G_{p_i}: K_{p_i}|={p_i}^{k_i}$. Then by Lemma \ref{dimeH}, we have 
$${\rm{dim}}(I_H)={p_1}^{h_1}{p_2}^{h_2}(1-\frac{1}{p_1})(1-\frac{1}{p_2})={p_1}^{k_1}{p_2}^{k_2}(1-\frac{1}{p_1})(1-\frac{1}{p_2})={\rm{dim}}(I_K).$$  
It follows that  $h_i=k_i$ for $i\in \{1,2\}$. So $|H|=|K|$.

$\bf{Case ~2}:$ $H=H_{p_1}\times G_{p_2}$ and $K=K_{p_1}\times G_{p_2}$ where for  $H_{p_1}, K_{p_1} <G_{p_1}$ are cocyclic subgroups. If we have  $|G_{p_1}: H_{p_1}|={p_1}^{h_1}$ and $|G_{p_1}: K_{p_1}|={p_1}^{k_1}$, then we get 
$${\rm{dim}}(I_H)={p_1}^{h_1}(1-\frac{1}{p_1})={p_1}^{k_1}(1-\frac{1}{p_1})={\rm{dim}}(I_K).$$
So we get $h_1=k_1$ and $|H|=|K|$.

Similarly if $H=G_{p_1}\times H_{p_2}$ and $K=G_{p_1}\times K_{p_2}$ where for  $H_{p_2}, K_{p_2} <G_{p_2}$ are cocyclic subgroups such that ${\rm{dim}}(I_H)={\rm{dim}}(I_K)$, then $|H|=|K|$.

$\bf{Case ~3}:$ If $H=H_{p_1}\times H_{p_2}$ and $K=G_{p_1}\times K_{p_2}$ where $H_{p_1}<G_{p_1}$ and   $H_{p_2}, K_{p_2} <G_{p_2}$ are cocyclic subgroups, by Lemma \ref{dimeH}, ${\rm{dim}}(I_H)\neq{\rm{dim}}(I_K)$. It follows similarly if $H=H_{p_1}\times H_{p_2}$ and $K=K_{p_1}\times G_{p_2}$.

$\bf{Case ~4}:$ Let $H=H_{p_1}\times G_{p_2}$ and $K=G_{p_1}\times K_{p_2}$ where for  $H_{p_1}  <G_{p_1}$, $H_{p_2} <G_{p_2}$ are cocyclic subgroups and $|G_{p_1}: H_{p_1}|={p_1}^{h_1}$ and $|G_{p_2}: K_{p_2}|={p_2}^{k_2}$. Then  we have ${\rm{dim}}(I_H)={p_1}^{h_1}(1-\frac{1}{p_1})$, ${\rm{dim}}(I_K)={p_2}^{k_2}(1-\frac{1}{p_2})$ by Lemma \ref{dimeH}. 
Assume ${\rm{dim}}(I_H)={\rm{dim}}(I_K)$. Then we have ${p_1}^{h_1-1}(p_1-1)={p_2}^{k_2-1}(p_2-1)$.
If $h_1> 1$, then $p_1$ should divide $p_2^{k_2}$ which is a contradiction.
If $h_1=1$, then $p_1-1={p_2}^{k_2-1}(p_2-1)$ which is not possible as $p_1<p_2$. So ${\rm{dim}}(I_H)\neq{\rm{dim}}(I_K)$. 

\end{proof}

\begin{pro} Let $p_1, p_2$ be odd primes such that $p_1<p_2$ and 
$p_2\not\equiv 1({{\rm{mod}} ~p_1})$. Let $G=G_{p_1}\times G_{p_2}$ be an abelian group where each $G_{p_i}$ is Sylow $p_i$-subgroups of $G$. If $G_{p_1}$ and $G_{p_2}$ are homocyclic, then Condition C holds for $\mathcal{I}_{\FF_qG}$.
\end{pro}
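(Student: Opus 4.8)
The plan is to establish the only nontrivial implication in Condition C, namely that $\dim(I_H)=\dim(I_K)$ for $I_H,I_K\in\mathcal{I}_{\FF_qG}$ forces $I_H$ and $I_K$ to be $G$-equivalent (the reverse implication being automatic, as noted in the Introduction). First I would separate off the case in which one of $H$, $K$ equals $G$. Since $\dim\big((\FF_qG)\widehat{G}\big)=1$, it is enough to observe that $\dim(I_L)\geq 2$ for every cocyclic $L<G$. This is immediate from the dimension formula in Lemma \ref{dimeH}: writing $L=(\prod_{i\in S}G_{p_i})\times(\prod_{i\notin S}L_{p_i})$ with $S\subsetneq\{1,2\}$, one has $\dim(I_L)=\prod_{i\in\{1,2\}\setminus S}p_i^{h_i-1}(p_i-1)$ where $p_i^{h_i}=|G_{p_i}:L_{p_i}|\geq p_i$, and since $\{1,2\}\setminus S$ is nonempty and each factor is at least $p_i-1\geq 2$ (the primes being odd), we get $\dim(I_L)\geq 2$. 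Hence $\dim(I_H)=1$ forces $H=G$, likewise for $K$, so in this case $I_H=I_K=(\FF_qG)\widehat{G}$ and there is nothing to prove.

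In the remaining case both $H$ and $K$ are cocyclic subgroups of $G$. Here I would invoke Proposition \ref{equaldim}: because $G=G_{p_1}\times G_{p_2}$ with $p_1<p_2$ and $p_2\not\equiv 1\pmod{p_1}$, the equality $\dim(I_H)=\dim(I_K)$ yields $|H|=|K|$. Now the hypothesis that $G_{p_1}$ and $G_{p_2}$ are homocyclic enters through Corollary \ref{nonisoexist}(ii), which gives $H\cong K$; Proposition \ref{prop11} then upgrades this to a $G$-isomorphism of $H$ and $K$, and finally Lemma \ref{equivGiso} yields that $I_H$ is $G$-equivalent to $I_K$. Combining the two cases finishes the argument.

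The substantive input is Proposition \ref{equaldim}, whose case analysis is precisely where the arithmetic conditions $p_1<p_2$ and $p_2\not\equiv 1\pmod{p_1}$ are used; but since that proposition is already proved, the present statement reduces to an assembly of Lemma \ref{dimeH}, Proposition \ref{equaldim}, Corollary \ref{nonisoexist}(ii), Proposition \ref{prop11}, and Lemma \ref{equivGiso}. The only point demanding care is to peel off the ``$H=G$ or $K=G$'' case \emph{before} applying Proposition \ref{equaldim}, since that proposition is stated only for cocyclic subgroups; the dimension-$\geq 2$ bound above is exactly what makes this legitimate.
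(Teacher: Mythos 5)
Your proposal is correct and follows essentially the same route as the paper: separate off the $(\FF_qG)\widehat{G}$ case via the dimension formula of Lemma \ref{dimeH}, then chain Proposition \ref{equaldim}, Corollary \ref{nonisoexist}(ii), Proposition \ref{prop11}, and Lemma \ref{equivGiso}. The only cosmetic difference is how the first case is excluded: you use the bound $\dim(I_L)\geq 2$ for cocyclic $L$, while the paper notes that oddness of $|G|$ makes each factor $p_i-1$ even, so these dimensions cannot equal $1$; both are immediate from the same formula.
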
 

\begin{proof} Assume $G_{p_1}$ and $G_{p_2}$ are homocyclic. Let $I_H$ and $I_K$ be   ideals in $\mathcal{I}_{\FF_qG}$. As ${\rm{dim}}(\FF_q\widehat{G})=1$ and order of $G$ is odd, by dimension formula in Lemma \ref{dimeH}, either both $I_H$ and $I_K$ are equal to $\FF_q\widehat{G}$ or both of $I_H$ and $I_K$ are different from $\FF_q\widehat{G}$. Assume that  ${\rm{dim}}(I_H)={\rm{dim}}(I_K)$.  By Propostion \ref{equaldim}, we have $|H|=|K|$.  As  the Sylow $p$-subgroups of $G$ are homocyclic, by Corollary \ref{nonisoexist}(ii) we have that $H\cong K$. Then by Proposition \ref{prop11}, $H$ is $G$-isomorphic to $K$. So by Lemma \ref{equivGiso}, $I_H$ is $G$-equivalent to $I_K$. Hence 
Condition C holds for $\mathcal{I}_{\FF_q G}$. 
\end{proof}

We have also prove the following theorem.

\begin{thm}\label{dimC}
Let $n$ be an odd integer and let $G$ be an abelian group order $n$. Let $q$ be a prime power with $(q, n)=1$. If  Condition C holds for
$\mathcal{I}_{\FF_q G}$, then  every Sylow $p$-subgroup of $G$ is homocyclic.
\end{thm}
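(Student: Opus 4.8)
The plan is to argue by contrapositive, exactly mirroring the reverse directions of Theorems \ref{main2A} and \ref{main2B}. Suppose that some Sylow $p$-subgroup $G_{p_j}$ of $G = G_{p_1} \times \cdots \times G_{p_k}$ is not homocyclic. Applying Corollary \ref{nonisoexist}(i) to $G_{p_j}$, I obtain two cocyclic subgroups $H_{p_j}$ and $K_{p_j}$ of $G_{p_j}$ with $H_{p_j} \not\cong K_{p_j}$ but $|H_{p_j}| = |K_{p_j}|$. Then I form the cocyclic subgroups $H = H_{p_j} \times \prod_{i \neq j} G_{p_i}$ and $K = K_{p_j} \times \prod_{i \neq j} G_{p_i}$ of $G$, which satisfy $H \not\cong K$ (the $p_j$-parts differ) and $|H| = |K|$.

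The key step is then to conclude that $\dim(I_H) = \dim(I_K)$. This is immediate from the last statement of Lemma \ref{dimeH}: since $H$ and $K$ are cocyclic subgroups of $G$ of the same order, $\dim(I_H) = \dim(I_K)$. (One may note that in the notation of Lemma \ref{dimeH} both $H$ and $K$ have the same index set $S = \{1,\dots,k\}\setminus\{j\}$ and the same quotient primes, so the product formula gives literally the same value; but the cited ``in particular'' clause already packages this.) On the other hand, because $H \not\cong K$, Proposition \ref{prop11} shows $H$ and $K$ are not $G$-isomorphic, and hence Lemma \ref{equivGiso} shows $I_H$ and $I_K$ are not $G$-equivalent. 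Thus $I_H, I_K \in \mathcal{I}_{\FF_q G}$ form a pair of non-$G$-equivalent codes of equal dimension, so Condition C fails for $\mathcal{I}_{\FF_q G}$. By contraposition, if Condition C holds then every Sylow $p$-subgroup of $G$ is homocyclic.

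I do not expect any real obstacle here: the odd-order hypothesis is only used implicitly to guarantee that $H$ and $K$ are genuine cocyclic subgroups (so that $I_H$ and $I_K$ are the $e_H$-type ideals rather than $\widehat G$), and every substantive ingredient — the dimension formula, the group-isomorphism criterion, and the translation to $G$-equivalence — has already been established. The only point requiring a line of care is to confirm that $H$ and $K$ really are cocyclic in $G$: this holds because $G/H \cong G_{p_j}/H_{p_j}$ and $G/K \cong G_{p_j}/K_{p_j}$ are cyclic $p_j$-groups, $H_{p_j}$ and $K_{p_j}$ being cocyclic in $G_{p_j}$. Note that this theorem only gives one implication; the converse (that homocyclic Sylow subgroups force Condition C) is not claimed in general, consistent with Example \ref{counterex}, which shows the equal-order conclusion of Lemma \ref{dimeH} can fail to be reversible.
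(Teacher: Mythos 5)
Your proposal is correct and follows essentially the same route as the paper's proof: contrapositive via Corollary \ref{nonisoexist}(i) to produce non-isomorphic cocyclic subgroups $H$, $K$ of equal order, then Proposition \ref{prop11} and Lemma \ref{equivGiso} to get non-$G$-equivalence, and the ``in particular'' clause of Lemma \ref{dimeH} to get $\dim(I_H)=\dim(I_K)$, contradicting Condition C. The only cosmetic differences are that you rebuild the pair $H$, $K$ explicitly from the non-homocyclic Sylow subgroup (as in Theorems \ref{main2A} and \ref{main2B}) whereas the paper cites the corollary for $G$ directly, and your aside that the odd-order hypothesis is needed to ensure $H$, $K$ are cocyclic is not actually required (cocyclicity follows from the construction regardless of parity), but this does not affect the argument.
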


\begin{proof}

Assume that Condition C holds for $\mathcal{I}_{\FF_q G}$. Suppose to the contrary that one of the Sylow $p$-subgroup  is not homocyclic.
Then by Corollary \ref{nonisoexist} (i), there exist two non-isomorphic cocyclic subgroups 
$H$ and $K$  of $G$ such that $H\not\cong K$ and $|H|=|K|$. In this case $H$ is not $G$-isomorphic
to $K$ by Proposition \ref{prop11}. Then Lemma \ref{equivGiso}  implies that
$I_H$ is not $G$-equivalent to $I_K$. Since Condition C holds, their dimensions are not equal. On the other hand by Lemma \ref{dimeH}, we have 
${\rm{dim}}(I_H)={\rm{dim}}(I_K)$  which gives a contradiction.
\end{proof}

We end the paper with the following question.
\begin{que} Let $G$ be an abelian group of odd order and $\FF_q$  a field  such that $\FF_qG$ is semisimple. Assume $\FF_q$ is not a splitting field for $G$.
Is there any other set  $\mathcal{I}$  of  ideals in $\FF_qG$ such  that Conditions A,  B holds on $\mathcal{I}$ if and only if for each prime divisor $p$ of $|G|$ the Sylow $p$-subgroup of $G$ is homocyclic?
\end{que}

\subsection*{Acknowledgements} The authors were partially supported by Mimar Sinan Fine Arts University 
Scientific Research Unit with project number 2019-27.

%\subsection*{Acknowledgements}

%{\bf{Acknowledgements.}}
{\small

}

\bibliographystyle{amsplain}

\begin{thebibliography}{EMG}

\bibitem{AT}
F. Altunbulak Aksu, \.I. Tuvay, {\it On the number of non-$G$-equivalent minimal abelian codes}, Turkish Journal of Mathematics  (2021); 45: 445--455. doi:10.3906/mat-2006-102

\bibitem{Berman}
S.D. Berman,   {\it Semisimple cyclic and abelian codes II},  Cybernetics and Systems Analysis  (1967); 3: 17--23. doi: 10.1007/BF01119999 

\bibitem{Berman2}

S.D. Berman,{ \it On the theory of group codes}, Cybernetics (1967); 3: 25--31. 
doi: 10.1007/BF01072842



\bibitem{BW}
M. Borello , W. Willems, {\it Group codes over fields are asymptotically good}, Finite Fields and Their Applications 2020; 68: 101738. doi: 10.1016/j.ffa.2020.101738

\bibitem{DFM}
F. S. Dutra, R. A. Ferraz, C. Polcino Milies, {\it Semisimple group codes and dihedral group codes}, Algebra
and Discrete Mathematics  (2009); 3: 28--48.

\bibitem{FGM}
R.A. Ferraz, M. Guerreiro, C. Polcino Milies, {\it G-equivalence in group algebras
and minimal abelian codes}, IEEE Transactions on Information Theory (2014); 60(1):  252--260. doi: 10.1109/TIT.2013.2284211

\bibitem{FM}
R.A. Ferraz, M. C. Polcino Milies, {\it Idempotents in group algebras and minimal abelian codes},
Finite Fields and Their Applications  (2007); 13: 382--393. doi.org/10.1016/j.ffa.2005.09.007


\bibitem{KS}
A.V.  Kelarev , P. Sol\' e, {\it Error-correcting codes as ideals in group
rings} Contemporary Mathematics-American Mathematical Society (2001); 273: 11--18. doi: 10.1090/conm/273/04419

\bibitem{MacWilliams1}
F.J. MacWilliams, 
 {\it Binary codes which are ideals in the group algebra of an abelian group}, The Bell System Technical Journal  (1970); 49 (6): 987--1011.  doi: 10.1002/j.1538-7305.1970.tb01812.x 

\bibitem{Miller}
R.L.  Miller,  {\it Minimal codes in abelian group algebras}, Journal of  Combinatorial Theory Series A  (1979); 26 (2): 166--178. doi:10.1016/0097-3165(79)90066-9

\bibitem{MS}
C. P. Milies ve S. K. Sehgal, {\it An Introduction to Group Rings}, Dordrecht,
The Netherlands:Kluwer, 2002.

\end{thebibliography}

\end{document}